\documentclass[11pt, reqno, a4paper]{article}

\usepackage{authblk}
\usepackage{amsfonts, amsthm, amsmath, amssymb}
\usepackage{thmtools}
\usepackage[hypertexnames=false,colorlinks=true,linkcolor=green]{hyperref}
\usepackage{cleveref}
\usepackage{url}
\usepackage{xcolor}
\usepackage[normalem]{ulem}
\usepackage{tikz}
\usepackage{pgfplots}
\usepackage{graphicx}
\usepackage[caption=false]{subfig}

 \newtheorem{thm}{Theorem}[section]
 \newtheorem{cor}[thm]{Corollary}
 \newtheorem{lem}[thm]{Lemma}
 
 \newtheorem{defn}[thm]{Definition}
 \newtheorem{rem}[thm]{Remark}
 \numberwithin{equation}{section}

\newcommand{\N}{\ensuremath{\mathbb{N}}}
\newcommand{\R}{\ensuremath{\mathbb{R}}}
\newcommand{\C}{\ensuremath{\mathbb{C}}}

\newcommand{\Z}{\ensuremath{\mathbb{Z}}}

\newcommand{\hX}{\ensuremath{\hat{X}}}
\newcommand{\hY}{\ensuremath{\hat{Y}}}
\newcommand{\hZ}{\ensuremath{\hat{Z}}}

\newcommand{\hU}{\ensuremath{\hat{U}}}

\title{Pontryagin Maximum Principle in Free Probability Theory}

\providecommand{\keywords}[1]
{
	\small	
	\noindent\textbf{\textbf{Keywords }} #1
}

\providecommand{\amscodes}[1]
{
	\small	
	\noindent\textbf{\textbf{AMS Codes }} #1
}

\date{\today}

\author[1, 2]{Georg Schluechtermann}
\author[2]{Michael Wibmer}

\affil[1]{\footnotesize Faculty of Mathematics, Informatics and Statistics, LMU Munich, Germany}
\affil[2]{\footnotesize Faculty of Mechanical, Aeronautical and Automotive Engineering\\ University of Applied Sciences, Munich, Germany}

\begin{document}

\noindent

\maketitle

\begin{abstract}
	Motivated by the classical stochastic maximum principle, random matrices and free stochastic differential equations we, develop an analog maximum principle for control problems driven by non-commutative random variables, e.g. random matrices. We formulate an optimal control problem in the setting of free probability, consisting of the controlled forward equation, a free backward stochastic differential equation. For both, we give global existence theorems. Due to the non-commutative It\^{o}-formula, the definition of the Hamiltonian differs to the commutative case. Our strategy is to stay as close as possible to the commutative case. Finally we formulate and proof the maximum principle in the context of free probability. Several examples show the application of the maximum principle, where explicit solutions can be found. 
\end{abstract}

\keywords{Stochastic maximum principle; Free stochastic differential equation, Backward stochastic differential equations, Free probability theory, Random matrix theory, Optimal control}
~\\~\\
\amscodes{47C15, 49K27, 46L54}

\noindent

\section{Introduction}
Our starting point is the classical stochastic optimal control problem. Given a controlled, $\R^n$ valued stochastic differential equation
\begin{equation}
	dx_t=a(x_t, u_t)dt + b(x_t,u_t)dw_t, 0\leq t \leq T
\end{equation}
where $(w_t)_{t\geq 0}$ is a $d$-dimensional Brownian motion on a filtered probability space $(\Omega, \mathcal{F}, \mathbb{F}=(\mathcal{F})_{t\geq 0}, P)$. $x_0\in\R^d$ is the initial condition at $t=0$. $a,b$ are functions with certain regularity properties to ensure existence and uniqueness of solutions of the stochastic differential equation (see \cite{Pham2009}). The process $u=(u_t)$ is  $\R^n$ valued, progressively  measurable (with respect to $\mathbb{F}$). In this paper, we focus on finite horizon control problems. Let $0<T<\infty$. Then the aim is to find a control $\hat{u}$,  such that the gain function
\begin{equation*}
	J(u)=\mathbb{E}\left[ \int_0^T f(s,x_s , u_s)ds + g(x_T )\right]
\end{equation*}
is maximized over some suitable set of controls, i.e. $\sup_{u\in \mathcal{U}}J(u)$.
We refer to \cite{Pham2009}, \cite{Yong1999} for more details. There are two classical ways to solve the optimal control problem. The first approach is the so called dynamical programming principle (DPP), developed in the 1950s by Bellman. In general, this attempt leads to a second order and nonlinear partial differential equation, the so called Hamilton-Jacobi-Bellman equation (HJB). Solutions to the HJB equation are candidates for the optimal solution of the control problem. For details about DPP we refer to \cite[Section 3]{Pham2009}. The second approach is the so called stochastic maximum principle (SMP), which is the generalization of the classical Pontryagin optimality principle to dynamical systems, described by stochastic differential equations (SDEs). Necessary conditions for optimality rely on a backward stochastic differential equation (BSDE) and a Hamiltonian function $h$. Pontryagin's principle is expressed in an optimality condition on $h$ based on the solution of the BSDE. For details about SMP we refer to \cite[Section 6]{Pham2009}. In \Cref{sec:preliminiaries} we formulate the classical SMP in more detail.\\

The aim of this paper is to carry over the general stochastic maximum principle to the setting of free probability resp. non-commutative random variables. At a first glimpse, one can consider symmetric $n\times n$ random matrices as random variables. In general, we consider a non-commutative probability space as a pair $(\mathcal{A}, \varphi)$, where $\mathcal{A}$ is a von Neumann algebra with a faithful, unital, normal trace $\varphi$ (\cite{MingoSpeicher2017}).\\

Stochastic optimization problems in the non-commutative setting appeared in the late 90's by studying large deviations for matrix Brownian motions (\cite{Biane2003}). Hamilton-Jacobi-Bellman equations in the non-commutative setting appeared first in \cite{Jekel2020}. 
We refer to \cite{gangbo2025viscsol} for a brief summary about works concerning applications of optimal control e.g. in free optimal transport and free stochastic analysis.\\
A maximum principle for optimal control of stochastic evolution equations was considered in \cite{dukaimeng}. Stochastic optimal control of interacting particle systems in Hilbert spaces were treated in \cite{deFeooptimcontinHilbert}. We refer to \cite{Boscain2021} for an introduction to the Pontryagin maximum principle for Quantum Optimal Control. Control in Hilbert space and first order mean field type problems can be found in \cite{bensoussan2021controlhilbertspaceorder}.\\
 
 Recently Ganbo et. al introduced in \cite{gangbo2025viscsol} stochastic optimal control problems in the free probability setting and considered their solution by help of the dynamic programming principle. The forward equation was a free differential equation with a constant diffusion factor. Additionally, they added a single classical Brownian motion to the differential equation, motivated by parallels between mean field games and random matrix theory. Dabrowksi applied optimal control in \cite{dabrowski2017laplaceprinciplehermitianbrownian} in the context of free entropy. The underlying free stochastic differential equation was driven by a constant diffusion factor, too. In \cite{das2025freeprobabilisticframeworkdenoising}, the author Das extended diffusion modeling to the operator valued framework. The forward dynamics was realized by a free Ornstein-Uhlenbeck flow, the reverse-time dynamics appeared as free deconvolution process driven by the conjugate variable. \\

To the best knowledge of the authors, a stochastic maximum principle in the context of general free stochastic differential equations has not been formulated before. In \cite{WangWangFermion}, the authors considered a stochastic maximum principle in the context of fermion Brownian motion. While general quantum probability theory relies on tensor products and defines independence via commuting subsystems, free probability theory utilizes free products of algebras for strictly non-commuting variables. The fundamental distinction lies in this notion of freeness, which leads to distinct limit theorems such as the Wigner semicircle law rather than the classical Gaussian distribution.\\

To formulate control problems, we extend free stochastic differential equations (\cite{Kargin}, \cite{SchlueWib2023}, \cite{wibmer_schlue_milstein2026}, \cite{kummererspeicher}, \cite{BIANESPEICHER2001581}) by a control variable and prove global solutions assuming global Lipschitz property of the underlying drift function as well as to the product of the diffusion functions. 
 We will formulate the stochastic maximum principle based on a forward backward system of stochastic differential equations. We prove the maximum principle allowing for nonlinear drift and diffusion terms of the underlying free forward equation. Surprisingly the formulation of the free version of the maximum principle is closer than expected to the commutative case. The maximum principle in the non-commutative setting naturally requests for differentiating operator functions, which map (self-adjoint) operators from $\mathcal{A}$ to $\mathcal{A}$. Such derivatives can be expressed by help of multiple operator integrals (MOIs, \cite{Skripka2019}). They are a powerful tool, but rather inflexible in terms of dealing with applications. Instead of dealing directly with MOIs, we express the Hamiltonian and their derivatives in $\R$ and applying the functional calculus  to transfer these functions to the non-commutative setting. The stochastic maximum principle and the backward stochastic differential equations are then formulated via the Hamiltonian function $H$. There are two main differences to the commutative case. First, the non-commutative version of the It\^{o}-formula (\Cref{sec:freeItoFormula}, \cite[Theorem 5.3.13]{Biane1998}) imposes additional terms in $H$, which do not appear in the commutative case. Second, the martingale representation theorem (\Cref{sec:freeCalc-BiprocStochInt}) is needed in order to formulate a backward equation. Unfortunately, the representation theorem request the use of general biprocesses, defined in \cite{Biane1998}. It turns out that, although the stochastic maximum principle as a theorem is rather similar to the commutative case, the applicability turns out to more challenging. The It\^{o}-formula and biprocesses generate a Hamiltonian, which is difficult to work with. Nevertheless, we give linear examples, for which an explicit solution can be found. The solution process is aligned to the commutative case and the examples show the differences to the non-commutative setting.\\
 
 The paper is organized as follows. In \Cref{sec:preliminiaries} we formulate preliminaries, necessary tools and fix the notation. In \Cref{sec:fSDE} formulate a global existence theorem on finite intervals. In \Cref{sec:defoptimcontrolproblem} we formulate the optimal control problem and \Cref{sec:BSDEsandFreeOptimControl} contains the definition of the Hamiltonian, the BSDE and the final SMP. In \Cref{sec:examples} we first present the aforementioned linear examples and present the derivation of a Riccati differential equation. 
 
\section{Preliminaries}\label{sec:preliminiaries}
In this section we summarize the necessary and well known ingredients for the following chapters. 
\subsection{Classical Stochastic Maximum Principle (SMP)}\label{sec:classicalsmp}
The classical stochastic Pontryagin principle acts as a template for the formulation of the Pontryagin principle in the free probability setting. Therefore we repeat the classical principle, as it is referenced in the following chapters. For the background and details on all the necessary assumptions and conditions, we refer to \cite{Pham2009} and \cite{Yong1999}. We will mainly follow \cite{Pham2009} in the following.\\
Consider a filtered probability space $(\Omega, \mathcal{F}, \mathbb{F}=(\mathcal{F}_t)_{t\geq 0}, \mathbb{P})$. $X_t(\omega)$ describes the $\R^n$-valued state of a system at time $0\leq t\leq T<\infty$ in a world scenario $\Omega$. The time dynamics of the state is described by the stochastic differential equation
\begin{equation}
	dx_t=a(t,x_t, u_t)dt + b(t,x_t,u_t)dw_t.
\end{equation}
$w=(w_t)_{t\geq 0}$ is a $n$-dimensional Brownian motion on the underlying filtered probability space. The drift and diffusion function fulfill certain conditions to ensure global existence on the finite interval $[0,T]$. The control $u=(u_t)_{t\geq 0}$ is a progressively measurable process (with respect to $\mathcal{F}$) in a suitable subset of $U\subset \R^m$. For details on the assumptions we refer to \cite{Pham2009}.\\

For the stochastic maximum principle we take a function $f:[0,T]\times \R^n\times U\rightarrow \R$ continuous in $(t,x)$ for all $u\in U$  and a concave function  $g:\R^n \rightarrow \R$. Assuming both $f$ and $g$ satisfy a quadratic growth condition in $x$, we can define a so called gain function $J$ as 
\begin{equation}
	J(u)=\mathbb{E}\left[\int_0^Tf(x,x_s, u_s)ds + g(x_T)\right].
\end{equation}
for $u \in U$. Due to the quadratic growth condition on $f$, the set of controls $\mathcal{U}$ does not depend on the initial condition (\cite[Remark 3.2.1]{Pham2009}. The gain function $J$ is finite over all controls. The objective is to maximize $J$ over control processes
\begin{equation}
   \sup_{u\in U}J(t,x,u).
\end{equation}

We define the generalized Hamiltonian $H:[0, T] \times \mathbb{R}^n \times U \times \mathbb{R}^n \times \mathbb{R}^{n \times d} \rightarrow \mathbb{R}$ by

\begin{equation}\label{eq:genhamilton-classic}
	H(t, x, u, y, z)=b(x, u) . y+\operatorname{tr}\left(\sigma^{T}(x, u) z\right)+f(t, x, u), 
\end{equation}

and assume that $H$ is differentiable in $x$ with derivative denoted by $D_x H$. We consider for each $u \in U $, the BSDE, called the adjoint equation:

\begin{equation}\label{eq:BSDE-classic}
	-d y_t=H_x\left(t, x_t, \alpha_t, y_t, z_t\right) d t-z_t d w_t, \quad y_T=D_x g\left(x_T\right) 
\end{equation}

The stochastic maximum principle in the classical theorem, allowing for nonlinear drift and diffusion terms, is given in  \cite[Theorem 6.4.6]{Pham2009}. It is the template for the main \Cref{thm:freestochprinciple-1}.
\begin{thm}\label{thm:pontryagin-classic}
	
Let $\hat{u} \in U$ and $\hat{x}$ the associated controlled diffusion. Suppose that there exists a solution $(\hat{y}, \hat{z})$ to the associated BSDE \eqref{eq:BSDE-classic}, such that

\begin{equation*}
	H\left(t, \hat{x}_t, \hat{u}_t, \hat{y}_t, \hat{z}_t\right)=\max _{u \in U} H\left(t, \hat{x}_t, u, \hat{y}_t, \hat{z}_t\right), \quad 0 \leq t \leq T, \quad \text { a.s. } 
\end{equation*}

and

\begin{equation*}
	(x, u) \rightarrow H\left(t, x, u, \hat{y}_t, \hat{z}_t\right) \quad \text { is a concave function, }
\end{equation*}

for all $t \in[0, T]$. Then $\hat{u}$ is an optimal control, i.e.

$$
J(\hat{u})=\sup _{u \in U} J(u) .
$$

\end{thm}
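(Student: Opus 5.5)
We follow the standard sufficiency argument of \cite[Theorem 6.4.6]{Pham2009}. We fix an arbitrary admissible control $u\in\mathcal U$ with state $x$ and estimate the difference
\begin{equation*}
J(\hat u)-J(u)=\mathbb E\left[\int_0^T \big(f(t,\hat x_t,\hat u_t)-f(t,x_t,u_t)\big)dt\right]+\mathbb E\left[g(\hat x_T)-g(x_T)\right].
\end{equation*}

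\textbf{Terminal term.} Since $g$ is concave and differentiable, we have $g(\hat x_T)-g(x_T)\ge D_xg(\hat x_T)\cdot(\hat x_T-x_T)=\hat y_T\cdot(\hat x_T-x_T)$.

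\textbf{It\^o's formula.} Next we apply It\^o's product formula to $\hat y_t\cdot(\hat x_t-x_t)$ on $[0,T]$. The starting value vanishes because $\hat x_0=x_0$. Using the forward equations for $\hat x$ and $x$ and the BSDE \eqref{eq:BSDE-classic} for $(\hat y,\hat z)$, the drift part of $\hat y_T\cdot(\hat x_T-x_T)$ equals
\begin{equation*}
\int_0^T\Big[-(\hat x_t-x_t)\cdot H_x(t,\hat x_t,\hat u_t,\hat y_t,\hat z_t)+\hat y_t\cdot\big(b(\hat x_t,\hat u_t)-b(x_t,u_t)\big)+\operatorname{tr}\big((\sigma(\hat x_t,\hat u_t)-\sigma(x_t,u_t))^T\hat z_t\big)\Big]dt.
\end{equation*}
The remaining terms are stochastic integrals against $dw$.

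\textbf{Localization.} We show that these stochastic integrals are local martingales and that their expectations vanish. The plan is to localize with stopping times $\tau_n\uparrow T$ and pass to the limit. For this we use the integrability of $\hat x$ and $x$ (square integrable, by the standing Lipschitz/growth assumptions) together with that of $(\hat y,\hat z)$ (in $S^2\times H^2$, as a solution of the BSDE), via Burkholder--Davis--Gundy and dominated convergence. This is the only genuinely technical step. If the standing assumptions do not give enough integrability, one adds it to the notion of admissible control, as is done in \cite{Pham2009}.

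\textbf{Concavity.} Taking expectations and combining with the definition \eqref{eq:genhamilton-classic} of $H$, we obtain
\begin{equation*}
J(\hat u)-J(u)\ge \mathbb E\int_0^T\Big[H(t,\hat x_t,\hat u_t,\hat y_t,\hat z_t)-H(t,x_t,u_t,\hat y_t,\hat z_t)-(\hat x_t-x_t)\cdot H_x(t,\hat x_t,\hat u_t,\hat y_t,\hat z_t)\Big]dt.
\end{equation*}
We show the integrand is $\ge0$ a.s. By concavity of $(x,u)\mapsto H(t,x,u,\hat y_t,\hat z_t)$, we have
\begin{equation*}
H(t,x_t,u_t)\le H(t,\hat x_t,\hat u_t)+H_x\cdot(x_t-\hat x_t)+(\text{supergradient in }u)\cdot(u_t-\hat u_t).
\end{equation*}
Because $\hat u_t$ maximizes $u\mapsto H(t,\hat x_t,u,\hat y_t,\hat z_t)$ over $U$, the $u$-component of the supergradient at $(\hat x_t,\hat u_t)$ can be chosen so that its contribution is $\le0$. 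This is the one delicate convex-analysis point, since $H$ need not be differentiable in $u$.

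To handle it, we would use the concave function $\phi(x)=\sup_{u\in U}H(t,x,u,\hat y_t,\hat z_t)$. It is concave because it is the partial supremum of a jointly concave function. It satisfies $\phi\ge H(\cdot,u_t)$ and $\phi(\hat x_t)=H(\hat x_t,\hat u_t)$, and $H_x(t,\hat x_t,\hat u_t,\cdot)$ is a supergradient of $\phi$ at $\hat x_t$ (since $\phi-H(\cdot,\hat u_t)\ge0$ attains its minimum $0$ at $\hat x_t$). Hence $H(x_t,u_t)\le\phi(x_t)\le\phi(\hat x_t)+H_x\cdot(x_t-\hat x_t)$, which gives nonnegativity of the integrand.

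Therefore $J(\hat u)\ge J(u)$ for every admissible $u$, which proves optimality.
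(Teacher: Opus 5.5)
Your proposal is correct and follows the same route as the proof the paper relies on: the paper cites \cite[Theorem 6.4.6]{Pham2009} and reproduces that argument in its free version, Theorem~\ref{thm:freestochprinciple-1}, namely concavity of $g$, It\^o's product formula for $\hat y_t\cdot(\hat x_t-x_t)$, substitution of the adjoint equation and of the definition of $H$, and concavity to sign the resulting integrand. Your localization remark and your auxiliary concave function $\phi(x)=\sup_{u\in U}H(t,x,u,\hat y_t,\hat z_t)$, whose superdifferential at $\hat x_t$ is forced to be $\{H_x(t,\hat x_t,\hat u_t,\hat y_t,\hat z_t)\}$, make explicit two steps the paper leaves implicit; note that the concavity of $\phi$ uses the convexity of $U$, which is already implicit in the joint-concavity hypothesis.
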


As an example, consider the linear quadratic optimal control (LQR) problem. We briefly summarize, how to obtain the well known matrix Riccati differential equation out of the maximum principle for a linear quadratic optimal control problem (see e.g. \cite{Yong1999}).\\
The system of an LQR-problem is given by the stochastic differential equation
\begin{equation}\label{eq:LQR_SDE_classical}
	dx_t=(Ax_t+Bu_t)dt + Du_tdw_t
\end{equation}
on $[0,T]$. Let $u_t\in \R^m$, $x_t\in\R^n$. Furthermore $A,C\in \R^{n\times n}$, $B\in\R^{n\times m}$ are constant matrices over $\R$. $(w_t)_{t\geq 0}$ is a classical Brownian motion. The cost functional is given by
\begin{equation}
	J(u)=-1/2\mathbb{E}\left[\int_0^T x_s^TQx_s + u_s^TRu_s ds + x_T^TMx_T\right],
\end{equation}
where $M, Q\in\R^{n\times n}$ are positive semidefinit, symmetric and $R\in\R^{m\times m}$ is positive definite. We define $H:[0,T]\times\R^n\times\mathbb{U}\times \R^n \times \R^{n \times d}\rightarrow \R$, the so called 
generalized Hamiltonian, as
\begin{equation}\label{eq:classical_generalized_Hamilton}
	H(t,x,u,y,z)=(Ax+Bu)\cdot y+ Du\cdot z - 1/2\left(x^TQx + u^TRu\right).
\end{equation}
Differentiating $H$ w.r.t. $x$ shows, that
\begin{equation}
	H_x(t,x,u,y,u)=A^Ty-Qx.
\end{equation}
The so called backward stochastic differential equations (BSDE) is given by
\begin{equation}\label{eq:BSDE_LQR_classical}
	-dy_t=(A^Ty_t+Qx_t)dt + z_tdw_t, ~ y_T=-Mx_T.
\end{equation}
The optimality condition on the generalized Hamiltonian $H$ in \Cref{thm:pontryagin-classic} gives
\begin{equation}\label{eq:h3_deriv_matrixriccati}
	H_u(t,x,u,y,z)=B^Ty+D^Tz-Ru.
\end{equation}
Setting $H_u=0$ we obtain 
\begin{equation}\label{eq:control}
	u=R^{-1}(B^Ty+D^Tz).
\end{equation}
To solve the LQR-problem we make the ansatz $y=Px$ with some deterministic $P:[0,T]\rightarrow \R^{n\times n}$.
Inserting the ansatz into \eqref{eq:BSDE_LQR_classical} gives
\begin{equation}\label{eq:h1_deriv_matrixriccati}
	-dPx_t-Pdx_t=(A^Ty_t+Qx_t)dt - z_tdw_t.
\end{equation}
Inserting \eqref{eq:LQR_SDE_classical} into \eqref{eq:h1_deriv_matrixriccati} yields
\begin{equation}\label{eq:h2_deriv_matrixriccati}
	-dPx_t-P(Ax_t+Bu_t)dt - PDu_tdw_t = (A^Ty_t+Qx_t)dt - z_tdw_t.
\end{equation}
Due to \eqref{eq:control} and comparing the diffusion term on the left and right in \eqref{eq:h2_deriv_matrixriccati} shows $z_t=PDu_t$. Then a comparison of the drift coefficients in \eqref{eq:h2_deriv_matrixriccati} results in the classical matrix-valued Riccati differential equation
\begin{equation}\label{eq:matrixriccati-classical}
	dP+A^TP+PA+PB(R-D^TPD)^{-1}B^TP-Q=0.
\end{equation}
There are more general cases, for example if the diffusion is also linear in $u$. For details about the solvability and more general cases we refer to \cite{Yong1999}. In \Cref{seq:LQR-example} we consider the linear quadratic optimal control problems in the context of free probability. The aim is to mimic the solution process as presented above. 

\subsection{Free Probability}
Free probability was created by C. Voiculescu in mid 1980's by studying properties of von Neumann algebras. He introduced the notion of freeness, which extends the notion of independent random variables to non-commutative setting. He also discovered, that random matrices satisfy the freeness conditions asymptotically. By the help of non-commutative algebras it is possible to develop non-commutative probability theory. The limits $N\rightarrow\infty$ can be handled properly in algebraic structures and lead to fruitful concepts. It turns out that non-commutative probability theory is realized by using operator algebras such as von Neumann algebras. We refer e.g. to   \cite{anderson_guionnet_zeitouni_2009}, \cite{MingoSpeicher2017}, \cite{TaoIntroRMT} and references therein, for setting up non-commutative probability theory and relations to random matrices. To be complete, we give the following general definition (see e.g. \cite{MingoSpeicher2017}). 
\begin{defn}
	A non-commutative probability space is a pair $(\mathcal{A}, \varphi)$, where $\mathcal{A}$ denotes a von Neumann operator algebra and $\varphi:\mathcal{A}\rightarrow \C$ a faithful unital normal trace.
\end{defn}
Since we consider von Neumann algebras with a unital, faithful and normal trace $\varphi:\mathcal{A}\rightarrow\C$, we can introduce for $1\leq p<\infty$ a norm on $\mathcal{A}$ by $\|X\|_p=\varphi(|X|^p)^{\frac{1}{p}}$. The Banach space completion of $\mathcal{A}$ by $\|\cdot\|_p$ is denoted by $L_p(\varphi)$ (see e.g. \cite{PISIER20031459}).
Since the trace is finite we may consider $\mathcal{A}$ as a subset of the predual $L_1(\varphi)$ of the von Neumann algebra $\mathcal{A}=L_\infty(\varphi)$. 
By $\| \cdot \|$ we denote the usual operator norm in $\mathcal{A}$. Let $\mathcal{A}^{sa}=\{a\in\mathcal{A}, a^*=a\}$. For a non-commutative random variable $X\in\mathcal{A}^{sa}$,	there is a unique probability measure on $\R$ with compact support having the same moments as $X$ (e.g. \cite{MingoSpeicher2017}, \cite{TaoIntroRMT}). This probability measure is the distribution of the non-commutative random variable X. We state the following definition, which is from \cite{anderson_guionnet_zeitouni_2009}:
\begin{defn}
	Let $\mathbb{X}=(X_i)_{i\in J}$ be a family of elements of $(\mathcal{A}, \varphi)$. $\mathbb{C}\langle\mathbb{X}\rangle$ denotes the set of polynomials in $X_i$. The distribution (or law) of $\mathbb{X}$ is the map $\mu_\mathbb{X}:\mathbb{C}\langle\mathbb{X}\rangle\mapsto \C$ such that
	$$
	\mu_\mathbb{X}(P)=\varphi(P(\mathbb{X})).
	$$
\end{defn}
The non-commutative analog of independence of classical random variables is the concept of free independence, or shortly freeness, of subalgebras of $\mathcal{A}$ (\cite{MingoSpeicher2017}). Let $\mathcal{A}_1,\dots \mathcal{A}_n$ be a family of $n\in\N$ subalgebras of $\mathcal{A}$. They are called freely independent (or simply free) in the sense of Voiculescu, if $\varphi\left(X_1X_2\dots X_m\right)=0$ 
whenever the following conditions
\begin{enumerate}
	\item $X_j\in\mathcal{A}_{i(j)} $, where $i(1)\neq i(2), i(2)\neq i(3), \dots , i(n-1)\neq i(n)$, $j=1,\dots,m$
	\item $\varphi(X_i)=0$ for all $i=1,\dots,n$
\end{enumerate} 
hold (\cite[Definition 11]{MingoSpeicher2017}). If $X\in\mathcal{A}$ is a self-adjoint element, then there is a unique spectral measure $\mu$ on $\R$ so that the moments of $X$ are the same as the moments of the probability measure $\mu$ defined by
$
\varphi(X^k)=\int_\R x^k d\mu(x), 
$
see \cite[pp. 51]{MingoSpeicher2017}. An important role plays the Cauchy transform $G_X$ of $\mu$ defined by
$
G_X(z)=\int_{\R} \frac{d\mu(x)}{x-z},
$
which is an analytic function defined on $\C^+$ with values in $\C^+$. The Cauchy transform $G_X$ is the expectation of the resolvent of $X$, i.e.
$
G_X(z)=\varphi\left(\left(X-z\right)^{-1}\right)
$.\\
The Cauchy transform carries all the properties of the spectral probability distribution of the self-adjoint operator $X$. 
In \cite{BIANESPEICHER2001581} the authors used the Hilbert transform of the solution of the underlying fSDE to obtain detailed information about the distribution of the solution. In \cite{Kargin} these results were extended to general fSDEs. They can be handled by a corresponding deterministic partial differential equations of the Cauchy transform $G_X$.

\subsection{Free Stochastic Calculus}
The analog of classical stochastic calculus is called free stochastic calculus and was developed in the late 90's by P.~Biane and R.~Speicher in \cite{Biane1998}. The notion of Brownian motion in the classical sense can be carried over to the non-commutative setting by replacing  \emph{independent increments} by \emph{free increments} and the normal distribution by the semicircle. It is then possible to setup the concept of a free stochastic integral, which contains left- and right integrands due to the non-commutativity. The free stochastic integral allows for a Burkholer-Gundy inequality even valid in $L_\infty(\varphi)$. The concept of stochastic processes is extended to so called biprocesses, which are the completion of so called simple (time-constant) processes in von Neumann algebras. The It\^{o}-formula is a key ingredient in this paper. 
In the following we briefly introduce theses topics. An good overview can be found in \cite{speicher2001freecalculus}.\\

 For the rest of the paper, consider a von Neumann algebra $\mathcal{A}$ with a faithful normal trace  $\varphi:\mathcal{A}\rightarrow \C$.
A filtration $\mathbb{F}=(\mathcal{A}_t)_{t \geq 0}$ is a family of von Neumann subalgebras $\mathcal{A}_t$ of $\mathcal{A}$ with $\mathcal{A}_s \subset \mathcal{A}_t$ for $s\leq t$.
A family of elements $(X_t)_{t\geq 0}\subseteq \mathcal{A}$ and is called adapted to the filtration $\mathbb{F}$ if $X_t\in\mathcal{A}_t$ for all $t\geq 0$. 

\subsubsection{Free Brownian Motion}
\label{sec:FreeBrownMotion}
Motivated by the concept of classical Brownian motion the definition within non-commutative probability is as follows. Consider a von Neumann algebra $\mathcal{A}$ with a faithful normal trace  $\varphi:\mathcal{A}\rightarrow \C$.
A filtration $\mathbb{F}=(\mathcal{A}_t)_{t \geq 0}$ is a family of von Neumann subalgebras $\mathcal{A}_t$ of $\mathcal{A}$ with $\mathcal{A}_s \subset \mathcal{A}_t$ for $s\leq t$.
A family of elements $(X_t)_{t\geq 0}\subseteq \mathcal{A}$ is called adapted to the filtration $\mathbb{F}$, if $X_t\in\mathcal{A}_t$ for all $t\geq 0$. 
\begin{defn}
	A free Brownian motion $(W_t)_{t\geq 0}$ is a family elements of $\mathcal{A}$ adapted to the filtration $\mathbb{F}=(\mathcal{A}_t)_{t\geq 0}$, which admits the following properties:
	\begin{enumerate}
		\item $W_t$ is a self-adjoint element of $\mathcal{A}$ with semi-circular distribution of mean zero and variance $t$, 		
		\item For all $s,t$ with $s\leq t$, the element $W_t-W_s$ is free of $\mathcal{A}_s$ and has a semi-circular distribution with mean $0$ and variance $t-s$.
	\end{enumerate}
\end{defn}
Free Brownian motion as a random variable in a von Neumann algebra  $\mathcal{A}$ is uniformly bounded in operator norm on finite time intervals, i.e. $\sup_{t\in[0,T]}\|W_t\|<\infty, \, 0<T<\infty$.

\subsubsection{Biprocesses and Free Stochastic Integral}\label{sec:freeCalc-BiprocStochInt}
In \Cref{sec:FreeMaxPrinc:BSDE}, we are going to formulate a free analog of the classical backward stochastic differential equation. Due to the martingale representation theorem (\cite[Proposition 5.3.13]{Biane1998}), the diffusion term consists of an adpated biprocess $Z\in\mathcal{B}_2^a$. The space $\mathcal{B}_2^a$ was developed in \cite{Biane1998} in order to define stochastic integration with respect to free Brownian motion, avoiding the construction via left- and right-processes on the free Fock space on $L_2(\R_+)$. $\mathcal{B}_2^a$ is the completion of the vector space of simple biprocesses with respect to the norm
$$
\|U\|_{\mathcal{B}_2([0, T])}=\left(\int_0^T\left\|U_s\right\|_{L^2(\varphi \otimes \varphi^{op})}^2 ~ds\right)^{1 / 2}.
$$
It is a Hilbert space associated with the inner product
$$
\langle U,V\rangle = \int_0^\infty \langle U_s,V_s\rangle~ds,
$$
where $\langle U_s,V_s\rangle$ is the inner product in $L^2(\mathcal{A},\varphi)\otimes L^2(\mathcal{A},\varphi)$. Adapted biprocesses form a closed subset of $\mathcal{B}_2$ and will be denoted by $\mathcal{B}_2^a$. \\

To handle biprocesses, additionally we need the opposite algebra $\mathcal{A}^{op}$ to $\mathcal{A}$, with the trace $\varphi^{op}$. The superscirpt highlights the algebraic structure, but as a linear map  $\varphi=\varphi^{{op}}$. The spaces $\mathcal{A}$ and $\mathcal{A} \otimes \mathcal{A}$ have a left $\mathcal{A} \otimes \mathcal{A}^{\mathrm{op}}$ module structure and defined the corresponding actions as $(a \otimes b) \sharp u=a u b$ and $(a \otimes b) \sharp(u \otimes v)=a u \otimes v b$. Of course, the action of $\mathcal{A} \otimes \mathcal{A}^{{op}}$ on $\mathcal{A} \otimes \mathcal{A}$ corresponds to the multiplication on the left in the algebra $\mathcal{A} \otimes \mathcal{A}^{{op}}$. The map $\varphi \otimes \varphi^{{op}}$ defines a tracial state on the $*$-algebra $\mathcal{A} \otimes \mathcal{A}^{{op}}$, and we shall denote by $L^p\left(\varphi \otimes \varphi^{{op}}\right)$ the corresponding $L^p$-spaces, thus $L^{\infty}\left(\varphi \otimes \varphi^{{op}}\right)$ is the von Neumann algebra tensor product of $\mathcal{A}$ and $\mathcal{A}^{{op}}$.\\

In the classical SMP, the generalized Hamiltonian \eqref{eq:classical_generalized_Hamilton} contains the derivative of the Hamiltonian w.r.t. to the space variable. This requires more knowledge about the representation of biprocesses.\\
According to \cite[Chapter 5]{Biane1998}, an adapted biprocess $Z\in\mathcal{B}_2^a$ can be decomposed into an infinite sum of a tensor product of so called multiple integrals. In the classical case, the space $L_2(\mu)$ (with Wiener measure $\mu$) can be decomposed into multiple integrals by the so called It\^{o}-Wiener chaos decomposition. Similarly, in the free context, elements from the free (unsymmetric) Fock space over $L_2(\R_+)$ can be decomposed as an infinite sum of free multiple integrals. In \cite[Chapter 5]{Biane1998} it was shown, that this is also valid for any adapted biprocess $\mathcal{B}_2^a$. For $Z\in\mathcal{B}_2^a$ we write $U=\sum_{j\in\Z}A_j\otimes B_j$ with adapted $A_j, B_j\in\mathcal{A}$.

\subsection{Free It\^{o}-Formula}\label{sec:freeItoFormula}
A free analog of the classical It\^{o}-formula was developed in \cite{Biane1998}. Given a free Brownian motion $(W_t)_{t\geq 0}$, then the It\^{o}-formula in product form reads
$$
 X dW_t X'\,Y dW_tY' = XY'\varphi(X'Y)dt,
$$
for $X,X',Y,Y'\in\mathcal{A}_t$. In \Cref{sec:generalizedhamilton} when defining the backward equation, we will need the It\^{o}-formula in terms of biprocesses. The product is then formulated by help of the algebraic tensor product in $\mathcal{A}\otimes \mathcal{A}$, see \cite[Definition 4.1.1]{Biane1998}. Consider a biprocess of the form $U_t=\sum_{i\in\Z}A_{i,t}\otimes B_{i,t}$. Then in principle, the It\^{o}-formula in product form reads
$$
 U_t\#dW_t Y dW_t Y'=  \sum_{i\in\Z} A_{i,t}dW_t B_{i,t}YdW_t Y' =  \sum_{i\in\Z} A_{i,t}Y'\varphi(B_{i,t}Y)dt.
$$
The adjoint variable $Z$ in the backward stochastic differential equation is a  biprocess, where we have to assume, that $Z$ must be expressed as an infinite sum. This complicates the definition of the Hamiltonian. We refer to \Cref{sec:generalizedhamilton}.

\subsection{Free Stochastic Differential Equations (fSDEs)}\label{sec:fSDE}
Free stochastic differential equations (fSDE) appeared first in \cite{kummererspeicher}.  In \cite{BIANESPEICHER2001581} the authors considered multidimensional equations, where the equations are driven by a multidimensional free Brownian motion. A set of interesting examples is given in \cite{Kargin}. Numerical Algorithms can be found in \cite{SchlueWib2023}, \cite{wibmer_schlue_milstein2026} and \cite{NiuWeiYinZhen}. Application to optimal transport is discussed in \cite{Jekel2020}, \cite{jekel2021tracial} and \cite{dabrowski2016free}. Evolution Equations are topic in \cite{JekelEvolution} and 
applications to optimal control was presented in \cite{das2025freeprobabilisticframeworkdenoising}.\\

We first briefly define free stochastic differential equations (fSDEs) and the notion of a solution. In the sequel we formulate a global existence theorem on the finite intervals $[0,T]$, by extending the local existence result by \cite{Kargin}. Just as in the classical case for ODEs, we assume global Lipschitz drift and diffusion coefficients to ensure global existence. The existence results in  \cite{BIANESPEICHER2001581} and \cite{Gangbo2022} considered constant diffusion terms, whereas in our case, drift and diffusion are allowed to be  potentially nonlinear. It is possible to prove an existence result weakening the Lipschitz continuity to gain existence with the technique of the measure of non-compactness. 

\begin{defn}\label{def-freeItoProcess}
	Let $X_0$ be a self-adjoint element  in $\mathcal{A}^{sa}$ and $a,b^i,c^i:\mathcal{A}\rightarrow \mathcal{A}$ continuous functions in the operator norm (resp. in the $L_p(\varphi)$-norm). We call 
	\begin{equation}\label{intro-freeSDE-diffform-orig}
		dX_t=a(X_t)dt+ \sum\limits_{i=1}^d b^i(X_t)dW_tc^i(X_t)
	\end{equation}
	a (formal) free Stochastic Differential Equation (fSDE).	
	A solution of \eqref{intro-freeSDE-diffform-orig} on $[0,T]$ with initial condition $X(0)=X_0$ is a process $(X_t)_{t\geq 0}$ with the following properties:
	\begin{enumerate}
		\item $X(0)=X_0$ is a self-adjoint element in $\mathcal{A}^{sa}$
		\item $X_t\in\mathcal{A}_t^{sa}$ for all $t\in [0,T]$
		\item The equation
		\begin{equation}\label{intro-freeSDE-inform-orig}
			X_t=X_0 + \int_0^t a(X_s)ds + \sum\limits_{i=1}^d\int_0^tb^i(X_s)dW_sc^i(X_s)
		\end{equation} 
		is fulfilled for all $t\in[0,T]$.
	\end{enumerate}
\end{defn}
As an example, consider the equation $dX_t=\theta X_tdW_tX_t$ with $X_0=\alpha I$. Following \cite{Kargin}, the spectral distribution of $X_t$ is supported on the interval 
	$$[\frac{(1-\alpha\theta\sqrt{t})^2}{(1-\alpha^2\theta^2t)^2}, \frac{(1+\alpha\theta\sqrt{t})^2}{(1-\alpha^2\theta^2 t)^2}],
	$$ if $t<(\alpha \theta)^{-2}$. 
	The solution explodes in finite time, i.e.
	$$
	\lim\limits_{t\rightarrow (\alpha \theta)^{-2}}\|X_t\|=\infty.
	$$
	On every interval $[t_0, \tau]$, where $\tau<(\alpha \theta)^{-2}$, we have $\|X_t\|<M(X_0)$ for $t_0\leq \tau$, where the bound depends on the initial value $X_0$.

\begin{defn} A function 
	$f:\mathcal{A} \rightarrow \mathcal{A}$ is called  locally operator Lipschitz in $L_p(\varphi),~1\leq p \leq \infty$, if there is a constant $L_f(K), K>0$ such that
	$$
	\|f(X_1)-f(X_2)\|_p\leq L_f(K)\|X_1-X_2\|_p,
	$$
	for all $\|X_i\|_p \leq K, i=1,2$.
	$f$ is called globally operator Lipschitz in $L_p(\varphi)$, if $L_f$ does not depend on $K$.
\end{defn}
It is worth mentioned, that real functions, which belong to the Besov space $B_{\infty,1}^1(\R)$, are globally operator Lipschitz, see \cite{AleksandrovPeller}.\\

First, we formulate a global existence theorem for equations of the form
\begin{equation}\label{eq:h34}
	dX_t=a(X_t)+b(X_t)dW_tb(X_t),\, 0\leq t \leq T < \infty.
\end{equation}
which only contain a single diffusion term. To meet the self-adjoint requirement, the left- and right factor must be equal.

\begin{lem}\label{lem:fSDE_global_uniform_bound}
	 Let $0<T<\infty$ and $a:\mathcal{A}\rightarrow \mathcal{A}$ be globally operator Lipschitz, i.e. 
	\begin{align}
		\|a(X)-a(Y)\| &\leq L_a \|X-Y\| \label{eq:thmglob-assum1}
	\end{align}
	such that $a(\mathcal{A}^{sa})\subseteq \mathcal{A}^{sa}$.
	Furthermore assume a continuous function $b:\mathcal{A}\rightarrow \mathcal{A}$, such that $b^2$ is at most linear growth, i.e. 
	\begin{align}
		\|b^2(X)-b^2(Y)\| &\leq L_{b}\|X-Y\|\label{eq:thmglob-assum4},
	\end{align}
	where $L_{b}>0$. Suppose \eqref{eq:h34} obeys a solution $(X_t)_{t\in[0,T]}$. Then there is a constant $M>0$, such that $\|X_t\|<M$ uniformly on the finite interval $[0,T]$.
\end{lem}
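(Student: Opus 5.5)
We would prove the uniform bound by a Gronwall argument in operator norm, with the stochastic integral controlled by the free Burkholder–Gundy inequality of Biane–Speicher. Writing the solution in integral form,
\begin{equation*}
X_t = X_0 + \int_0^t a(X_s)\,ds + \int_0^t b(X_s)\,dW_s\,b(X_s),
\end{equation*}
we set $m(t)=\sup_{0\le s\le t}\|X_s\|$. The aim is an inequality of the form $m(t)^2\le C_1 + C_2\int_0^t m(s)^2\,ds$, or a linear variant of it. Gronwall's lemma then gives $m(T)\le M$, with $M$ depending only on $\|X_0\|$, $T$, $L_a$, $L_b$, $\|a(0)\|$ and $\|b(0)\|$.

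\textbf{Step 1 (drift).} Global Lipschitz continuity \eqref{eq:thmglob-assum1} gives $\|a(X)\|\le \|a(0)\|+L_a\|X\|$. Hence
\begin{equation*}
\Bigl\|\int_0^t a(X_s)\,ds\Bigr\| \le t\,\|a(0)\| + L_a\int_0^t m(s)\,ds .
\end{equation*}

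\textbf{Step 2 (diffusion).} We use the free Burkholder–Gundy inequality in $L_\infty$: for an adapted biprocess $U$,
\begin{equation*}
\Bigl\|\int_0^t U_s\sharp dW_s\Bigr\| \le 2\sqrt{2}\,\Bigl(\int_0^t \|U_s\|^2_{\mathcal{A}\otimes_{\min}\mathcal{A}^{op}}\,ds\Bigr)^{1/2}.
\end{equation*}
For the integrand $U_s=b(X_s)\otimes b(X_s)$ this gives $\|U_s\|\le \|b(X_s)\|^2$. The key observation is that
\begin{equation*}
\|b(X_s)\|^2=\|b(X_s)^*b(X_s)\|=\|b^2(X_s)\|
\end{equation*}
when $b(X_s)$ is self-adjoint. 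Self-adjointness is needed anyway for $X_t\in\mathcal{A}^{sa}$, and we would assume it, or read $b^2$ as $b^*b$. Assumption \eqref{eq:thmglob-assum4} then gives $\|b(X_s)\|^2\le \|b^2(0)\|+L_b\, m(s)$. Applying the same estimate to every time in $[0,t]$ yields
\begin{equation*}
\sup_{r\le t}\Bigl\|\int_0^r b\,dW\,b\Bigr\| \le 2\sqrt2\Bigl(\int_0^t \bigl(\|b^2(0)\|+L_b m(s)\bigr)^2 ds\Bigr)^{1/2}.
\end{equation*}

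\textbf{Step 3 (Gronwall).} We combine the three estimates, square, and use $(x+y+z)^2\le 3(x^2+y^2+z^2)$ together with Cauchy–Schwarz on the drift integral. This produces
\begin{equation*}
m(t)^2 \le C_1 + C_2\int_0^t m(s)^2\,ds,
\end{equation*}
with constants independent of $t\in[0,T]$. Gronwall then gives $m(T)^2\le C_1 e^{C_2T}$, and we take $M$ slightly larger than $\sqrt{C_1e^{C_2T}}$. Finiteness of $m(t)$ is needed for Gronwall to apply. It holds because the solution is assumed to exist on $[0,T]$ as a process in $\mathcal{A}$, which we interpret as norm-continuous and hence bounded on compacts. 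If that is not granted, we would instead run the argument up to the stopping level $\tau_K=\inf\{t:\|X_t\|\ge K\}$ and let $K\to\infty$.

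\textbf{Main obstacle.} The delicate step is Step 2. The Burkholder–Gundy bound must be applied uniformly in $r\le t$, since the stochastic integral's supremum is not directly a martingale maximal statement in the operator-norm setting. One also has to justify that the integrand is an adapted biprocess in $\mathcal{B}^a_\infty$. For the supremum we would use the Doob-type version available in Biane–Speicher for free martingales. Failing that, we would bound each fixed $r$, which suffices: the right-hand side is monotone in $t$, so taking the supremum afterwards gives the same inequality for $m(t)$.
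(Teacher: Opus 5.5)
Your proposal is correct and follows the paper's own proof essentially step by step. Like the paper, you use the triangle inequality with the linear-growth bound on $a$, the free Burkholder--Gundy inequality for $b(X_s)\otimes b(X_s)$ together with $\|b(X_s)\|^2=\|b(X_s)^2\|$ and the Lipschitz bound on $b^2$, and then squaring and Gronwall. Your version is tidier than the paper's in two respects: you keep $\|X_0\|^2$ as a genuine constant rather than absorbing it into the integral, and you make explicit the self-adjointness of $b(X_s)$ that $\|b(X_s)\|^2=\|b(X_s)^2\|$ silently requires; also, no Doob-type supremum bound is needed, since, as you note yourself, the fixed-$r$ estimate already suffices.
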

\begin{proof}
	Let $t\in[0,T]$. Then the Burkholder-Gundy inequality (\cite{Biane1998}) yields
	\begin{align*}
		\|X_t\| &\leq \|X_0\|+\int_0^t\|a(X_s)\|ds + \left\|\int_0^tb(X_s)dW_sb(X_s)\right\|\\
		&\leq \underbrace{\|X_0\| + \int_0^t (\alpha + \beta \|X_s\|)ds}_{C_0} +  \underbrace{2\sqrt{2}\left(\int_0^t\|b(X_s)^2\|\|b(X_s)^2\|ds\right)^{1/2}}_{C_1}.
	\end{align*}
	Taking the square we get 
	$$
	\|X_t\|^2 \leq (C_0+C_1)^2\leq 4\max(C_0^2,C_1^2).
	$$
	Assumptions \eqref{eq:thmglob-assum1} and \eqref{eq:thmglob-assum4} state, that $a$ resp. $b^2$ grow at most linearly. Then Jensen's inequality, and $(a+b)^2\leq 2a^2+2b^2$ show, that there are constants $\alpha_g,\beta_g>0$ such that 
	$$
	C_0^2 \leq 4\max\left(\|X_0\|^2, \int_0^t(\alpha+\beta\|X_s\|)^2ds\right) \leq \int_0^t(\alpha_g+\beta_g\|X_s\|^2)ds
	$$
	and
	$$
	C_1^2\leq 8 \int_0^t(\alpha+\beta\|X\|)^2ds \leq \int_0^t(\alpha_g+\beta_g\|X_s\|^2)ds.
	$$
	Then Gronwall's inequality gives the uniform bound
	$$
	\|X_t\|^2 \leq M=\alpha_gTe^{\beta_g T}, t\in[0,T].
	$$
\end{proof}
Now we can state the global existence theorem.
\begin{thm}\label{thm:fSDE_globalexistence}
	Consider a fSDE as defined in \cref{eq:h34} with similar assumptions as in \Cref{lem:fSDE_global_uniform_bound}.
	Then for every $X_0 \in \mathcal{A}$, there is a unique solution $(X_t)_{0\leq t\leq T}$ of \cref{eq:h34} uniformly bounded on $[0,T]$.
\end{thm}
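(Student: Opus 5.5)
The plan is to combine a local existence result with the a priori bound of \Cref{lem:fSDE_global_uniform_bound}, following the classical continuation argument for ODEs. I would run a Picard iteration in the Banach space $\mathcal{S}_T$ of adapted, norm-continuous processes $(X_t)_{t\in[0,T]}\subseteq\mathcal{A}^{sa}$ with norm $\sup_{t\le T}\|X_t\|$. Define
$$
\Phi(X)_t = X_0+\int_0^t a(X_s)\,ds+\int_0^t b(X_s)\,dW_s\,b(X_s).
$$
First I check that $\Phi$ maps $\mathcal{S}_T$ into itself. Adaptedness follows from the construction of the free stochastic integral in \cite{Biane1998}. Self-adjointness follows from $a(\mathcal{A}^{sa})\subseteq\mathcal{A}^{sa}$ and the symmetric form $b\,dW\,b$, whose adjoint is $b^*\,dW\,b^*$. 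For the last point I will have to assume, or read into the hypotheses, that $b$ preserves self-adjointness.

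For the contraction estimate, the drift part gives $\|\int_0^t (a(X_s)-a(Y_s))ds\|\le L_a t\sup\|X-Y\|$. For the diffusion part, the Burkholder--Gundy inequality in operator norm, as used in the proof of \Cref{lem:fSDE_global_uniform_bound}, bounds the integral by $2\sqrt 2\,\|U\|_{\mathcal{B}_\infty}$. Here $U_s=b(X_s)\otimes b(X_s)-b(Y_s)\otimes b(Y_s)$, which I split as $(b(X_s)-b(Y_s))\otimes b(X_s)+b(Y_s)\otimes(b(X_s)-b(Y_s))$.

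This is the main obstacle: the hypothesis only controls $b^2$, not $b$ itself, in Lipschitz norm. My first attempt is to use the structure of the biprocess norm. For the operator-norm Burkholder--Gundy bound the relevant quantity is
$$
\Bigl(\int_0^t \|\,\|U_s\|\,\|^2 ds\Bigr)^{1/2},
$$
and for the tensor $c\otimes c$ with $c$ self-adjoint one has $\|c\otimes c\|=\|c^2\|$ by the C$^*$-identity. I would try to estimate $\|b(X)\otimes b(X)-b(Y)\otimes b(Y)\|$ by a constant times $\|b^2(X)-b^2(Y)\|$ on bounded sets, for instance via the polarization-type identity
$$
c\otimes c-d\otimes d=\tfrac12\bigl[(c-d)\otimes(c+d)+(c+d)\otimes(c-d)\bigr].
$$
If this fails in general, I would fall back on continuity of $b$ together with a local Lipschitz bound on balls, which is what \cite{Kargin} uses. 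The argument below only needs Lipschitz constants on the ball of radius $M+1$.

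With the contraction estimate in hand, I obtain a local solution on $[0,\delta]$. The step $\delta$ depends only on the Lipschitz constants on the ball of radius $2R$ with $R\ge\|X_0\|$, so $\Phi$ is a contraction on the closed ball of radius $2R$ in $\mathcal{S}_\delta$. Next I apply \Cref{lem:fSDE_global_uniform_bound}. Any solution on a subinterval $[0,\tau]$, $\tau\le T$, satisfies $\|X_t\|<M$, where $M$ depends only on $\|X_0\|$, $T$ and the growth constants, not on $\tau$. I therefore choose $R=M$. Restarting the iteration at $t=\delta,2\delta,\dots$ from $X_{k\delta}$, with $\|X_{k\delta}\|<M$, uses the same step $\delta$ each time. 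Since free Brownian increments after $k\delta$ are free from $\mathcal{A}_{k\delta}$, the shifted equation has the same form, so after finitely many steps the solution covers $[0,T]$.

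Uniqueness then follows from the same contraction estimate applied to the difference of two solutions. Both are bounded by $M$, which gives $\sup_{s\le t}\|X_s-Y_s\|^2\le C\int_0^t\sup_{r\le s}\|X_r-Y_r\|^2ds$, and Gronwall's inequality forces $X=Y$. The uniform bound on $[0,T]$ is exactly \Cref{lem:fSDE_global_uniform_bound}.
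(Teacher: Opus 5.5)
This is essentially the paper's argument: local existence (the paper simply cites \cite[Theorem 3.1]{Kargin}, while you run the Picard iteration yourself), then the a priori bound of \Cref{lem:fSDE_global_uniform_bound}, then finitely many restarts with a step size fixed by $M$. You make that last point more carefully than the paper does. Drop the polarization idea, which cannot work because $b\,dW\,b$ is governed by $b\otimes b$ rather than by $b^2$: already for $b(X)=X$, taking $X=p-q$ and $Y=p+q$ with nonzero projections $p,q$, $pq=0$, gives $X^2=Y^2$ but $\|X\otimes X-Y\otimes Y\|=2$. Your fallback hypothesis, that $b$ is locally operator Lipschitz and preserves self-adjointness, is therefore genuinely required. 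The paper assumes it tacitly when it invokes Kargin's local existence theorem, which needs locally operator Lipschitz coefficients.
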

\begin{proof}
	Theorem \cite[Theorem 3.1]{Kargin} states the existence of a local solution. For every $A>\|X_0\|$ there is a $t_0>0$, such that $X_t$ is a solution on $[0,t_0[$ and $\|X_t\|\leq A$. Due to \Cref{lem:fSDE_global_uniform_bound} we see that $\lim_{ t\rightarrow t_0} \|X_t\|\leq A\leq M<\infty$ and a new initial value $X_{t_0}\in\mathcal{A}^{sa}$ can be defined at time point $t_0$. Then apply the local existence result again. Due to the global uniform bound $M$, this procedure yields a global solution on $[0,T]$ in a finite number of steps.
\end{proof}

\begin{cor}\label{cor:fSDE_global_general}
	Let $0<T<\infty$. Consider the fSDE
	\begin{equation}\label{eq:h54}
		dX_t=a(X_t)dt + \sum_{i=1}^{d}b^i(X_t)dW_tc^i(X_t)
	\end{equation}
	Assume that $a$, and the products $b^ic^i, i=1,\dots d$ are globally operator Lipschitz. Then \eqref{eq:h54} has a unique global solution on $[0,T]$, uniformly bounded in operator norm.
\end{cor}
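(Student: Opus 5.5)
The plan is to follow the proof of \Cref{thm:fSDE_globalexistence} term by term: first establish an a priori uniform bound as in \Cref{lem:fSDE_global_uniform_bound}, now with $d$ diffusion terms, and then extend a local solution step by step. For the a priori bound, suppose $(X_t)_{t\in[0,T]}$ solves \eqref{eq:h54}. By the triangle inequality and the free Burkholder--Gundy inequality of \cite{Biane1998}, applied to each integral $\int_0^t b^i(X_s)dW_sc^i(X_s)$, I will estimate
\begin{equation*}
\|X_t\|\leq \|X_0\|+\int_0^t\|a(X_s)\|ds+2\sqrt{2}\sum_{i=1}^d\left(\int_0^t\|b^i(X_s)\|^2\|c^i(X_s)\|^2ds\right)^{1/2}.
\end{equation*}
Global operator Lipschitz continuity of $a$ and of $b^ic^i$ gives linear growth, $\|a(X)\|\leq\alpha+\beta\|X\|$ and $\|b^ic^i(X)\|\leq \alpha_i+\beta_i\|X\|$. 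I then square, use $(\sum_{k=0}^{d} x_k)^2\leq (d+1)\sum_k x_k^2$ and Jensen, and arrive at $\|X_t\|^2\leq \tilde\alpha+\tilde\beta\int_0^t\|X_s\|^2ds$. Gronwall's inequality then gives a bound $M$ depending only on $\|X_0\|$, $T$, $d$ and the Lipschitz constants.

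The main obstacle is this step. Burkholder--Gundy controls the integral by the product $\|b^i(X_s)\|\,\|c^i(X_s)\|$, whereas the hypothesis only controls $\|b^i(X_s)c^i(X_s)\|$, and the product of norms is not dominated by the norm of the product in general. In \Cref{lem:fSDE_global_uniform_bound} this issue is absorbed by the symmetric form $b\,dW\,b$ and the assumption on $b^2$. I would handle it in one of two ways. The first is to read the hypothesis, as the lemma implicitly does, as linear growth of $\|b^i(X)\|\,\|c^i(X)\|$; this holds, for instance, when $b^i,c^i$ are functional-calculus functions of $X$ with $|b^i(x)c^i(x)|$ controlling both factors. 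The second is to use the $L^2$-type Burkholder--Gundy bound $\|\int U\#dW\|\leq 2\sqrt{2}\,\|U\|_{\mathcal{B}_2^\infty}$ for the biprocess $U_s=b^i(X_s)\otimes c^i(X_s)$. In either case I would state explicitly that this is where the product assumption enters.

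With the bound in hand, the extension argument goes as follows. By \cite[Theorem 3.1]{Kargin}, which covers several diffusion terms $b^i\,dW\,c^i$ with locally Lipschitz coefficients, there is a local solution on $[0,t_0)$ with $\|X_t\|\leq A$ for any prescribed $A>\|X_0\|$. The local existence time depends only on $A$ and on the Lipschitz constants on the ball of radius $A$. I therefore choose $A=M+1$, which makes the step length $\delta>0$ uniform in the restart point. The limit $X_{t_0}=\lim_{t\to t_0}X_t$ exists in operator norm, because the integrands are bounded. Restarting from $X_{t_0}$, which is self-adjoint and adapted, and iterating $\lceil T/\delta\rceil$ times yields a solution on $[0,T]$ bounded by $M$. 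Self-adjointness is preserved as long as the sum of the diffusion terms is self-adjoint, which requires each $b^i\,dW\,c^i$ to be paired with its adjoint term, as the paper implicitly assumes.

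For uniqueness, I take two solutions $X,Y$, both bounded by $M$, and estimate $\|X_t-Y_t\|^2$ through Burkholder--Gundy and the Lipschitz bounds on $[-M,M]$. Gronwall's inequality then gives $X=Y$. Alternatively, uniqueness follows by concatenating the local uniqueness statements of \cite{Kargin}.
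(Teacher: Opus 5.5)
There is a genuine gap. It sits exactly at the step you flagged, and neither of your two repairs closes it under the stated hypothesis. Your first repair replaces ``$b^ic^i$ globally operator Lipschitz'' by linear growth of $\|b^i(X)\|\,\|c^i(X)\|$, so it proves a corollary with a different, stronger hypothesis. Your second repair is not an alternative at all. The biprocess form of Burkholder--Gundy measures $U_s=b^i(X_s)\otimes c^i(X_s)$ in the operator norm of $L^\infty(\varphi\otimes\varphi^{op})$. That norm is a cross norm, $\|b\otimes c\|=\|b\|\,\|c\|$, so you get back verbatim the inequality you already wrote. The product hypothesis genuinely does not control this quantity. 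Take $b(x)=1+x^2$ and $c(x)=(1+x^2)^{-1}$, paired as $b\,dW\,c+c\,dW\,b$ to keep self-adjointness. Then $b(X)c(X)=\mathbf{1}$, yet $\|b(X)\|\,\|c(X)\|=1+\|X\|^2$ whenever $0$ lies in the spectrum of $X$. The squared estimate is therefore quartic in $\|X_s\|$ and gives only a Riccati-type inequality, to which Gronwall does not apply. Your local existence and uniqueness steps have a secondary gap of the same kind. The splitting $b(X)\,dW\,c(X)-b(Y)\,dW\,c(Y)=(b(X)-b(Y))\,dW\,c(X)+b(Y)\,dW\,(c(X)-c(Y))$, and likewise Kargin's local theorem, need $b^i$ and $c^i$ to be locally operator Lipschitz individually, which is not assumed.

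The paper argues differently. It rewrites each pair of mixed terms by the polarization identity $b\,dW\,c+c\,dW\,b=(b+c)\,dW\,(b+c)-b\,dW\,b-c\,dW\,c$ and applies \Cref{lem:fSDE_global_uniform_bound} to the symmetric pieces. There the C$^*$-identity $\|b\|^2=\|b^2\|$ for self-adjoint $b$ converts the product of norms into the norm of a product, and the self-adjoint pairing you mention is built in. This route does not rescue the literal hypothesis either: the lemma then needs linear growth of $b^2$, $c^2$ and $(b+c)^2$, none of which follows from Lipschitz continuity of $bc$ (the example above violates all three). The honest fix for your write-up is to assume explicitly two things. First, linear growth of $\|b^i(X)\|\,\|c^i(X)\|$; for self-adjoint coefficients this follows from linear growth of $\|b^i(X)^2\|$ and $\|c^i(X)^2\|$ via $\|b\|\,\|c\|\le\frac12(\|b^2\|+\|c^2\|)$, which is what the polarization route actually uses. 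Second, local operator Lipschitz continuity of each $b^i$ and $c^i$. Under these assumptions your a priori bound, your continuation with uniform step length via $A=M+1$ (more careful than the paper's), and your Gronwall uniqueness argument all go through.
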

\begin{proof}
	The general equation \eqref{eq:h54} can be handled by the quality $bdWc+cdWb=(b+c)dW(b+c)-bdWb-cdWc$. The statement follows from similar arguments as in the proof of \Cref{thm:fSDE_globalexistence} and \Cref{lem:fSDE_global_uniform_bound}.
\end{proof}

\section{Controlled Diffusion Processes}\label{sec:defoptimcontrolproblem}
The aim of this section is to define the notion of a free finite stochastic optimal control problem.
The forward equation is a fSDE \eqref{intro-freeSDE-diffform-orig}, which allows nonlinear drift and diffusion terms, where both may depend on the state and control variable. In case of optimal control, we extend the possibly nonlinear drift and diffusion functions in \eqref{intro-freeSDE-diffform-orig} by a control variable. This requires additional conditions on the drift and diffusion in order to ensure global existence of a unique solution of the forward equation. \\
We start by defining the notion of a control variable and extending free stochastic differential equations (fSDEs) to include control components, followed by a global existence theorem. Then, we establish conditions for the objective functional to characterize the free stochastic optimal control problem. Our primary objective is to derive a free analogue to the stochastic maximum principle. The formal definition of a free backward stochastic differential equation (fBSDE) is provided in \cref{sec:FreeMaxPrinc:BSDE}.

\subsection{Control Process}\label{sec:DiffProc:Control}
We first define the notion of a control process in the context of non-commutative variables.
\begin{defn}
	Let $0<T<\infty$. A control process $U=(U_t)_{t\in[0,T]}$ is a self-adjoint process valued $\mathcal{A}^{sa}$, adapted to the filtration $\mathbb{F}=(\mathcal{A}_t)_{t\in[0,T]}$ and strongly measurable with respect to the time variable $t\in[0,T]$.
\end{defn}
In \Cref{sec:setofcontrols} we define the set $\mathcal{U}\subseteq \mathcal{A}^{sa}$ of admissible controls. Condition \eqref{eq:Cond-for-a-and-b-for-controlled-existence} will ensure the existence of a global solution of the forward equation and imply a well defined cost function. Before we proceed, we need to consider the assumption on the drift and diffusion function of the underlying fSDE.

\subsection{Drift Function}\label{sec:DiffProc:Drift}
The state $(X_t)_{t\in[0,T]}\subseteq \mathcal{A}^{sa}$ of the system is modeled by a free stochastic differential equation (fSDE). When dealing with control problems, we extend the drift function by a control process and consider operator functions of type $a:\mathcal{A}\times \mathcal{A}\rightarrow \mathcal{A}$.  The properties of $a$ need to ensure a unique strong solution of the underlying free stochastic differential equation.

We assume a drift function $a:\mathcal{A}\times \mathcal{A}\rightarrow \mathcal{A}$, which has the following properties:
\begin{enumerate}
	\item[(a)] 	$a$ is a  continuous function, for which $a(\mathcal{A}^{sa}\times\mathcal{A}^{sa})\subseteq \mathcal{A}^{sa}$ (see \Cref{rem:selfadjointinrange}).
	\item[(b)] There is a constant $K>0$ such that
	$$
	\|a(X_1,U)-a(X_2,U)\|\leq K\|X_1-X_2\|
	$$
	for all $U\in\mathcal{A}$ and $X_1,X_2\in\mathcal{A}$.   
\end{enumerate} 
Note that these assumptions imply, that for all  $X,U\in\mathcal{A}$,
\begin{align*}
	\|a(X,U)\| &\leq \|a(0,U)\| + L_a\|X\|.
\end{align*} 
\begin{rem}\label{rem:selfadjointinrange} Consider e.g. the drift $a(X,U)=XU$, which is not allowed since it may violate the self-adjoint requirement. Instead, for linear drift, we need to consider $a(X,U)=XU+UX$. In the case of linear quadratic optimal control, the drift is $a(X,U)=AX+XA+BU+UB$ for fixed $A,B\in\mathcal{A}$.
\end{rem}
 \begin{rem}\label{rem:aisC1}
 	In 
 	\Cref{sec:generalizedhamilton}, we define the Hamiltonian $H$ for the optimal control problem. Operator Lipschitz functions are not necessarily $C^1$. For the Hamiltonian to be well defined, it is required that $a$ is $C^1(\R)$. For the global existence of a solution of a fSDE, it is sufficient that $a$ is continuous (and operator Lipschitz).
 \end{rem}
\subsection{Diffusion Functions}\label{sec:DiffProc:Diffusion}
We deal with multiple diffusion terms as shown in equation \eqref{intro-freeCSDE-diffform}. Therefore we need to extend the diffusion functions by the control variable, i.e. $b^i, c^i:\mathcal{A}\times \mathcal{A}\rightarrow\mathcal{A}$ for $i=1,\dots,d$. To ensure global existence of the solution of the underlying fSDE we need some additional properties. 
\begin{enumerate}
	\item[(c)] $b^i$ and $c^i$ are continuous functions and $b^i(\mathcal{A}^{sa}\times\mathcal{A}^{sa})\subseteq \mathcal{A}^{sa}$ resp. $c^i(\mathcal{A}^{sa}\times\mathcal{A}^{sa})\subseteq \mathcal{A}^{sa}$ for all $i=1,\dots d$.
	\item[(d)] There is a constant $L_{bc}>0$,  such that
	\begin{align*}
		\|b^i(X_1,U)c^i(X_1,U)-b^i(X_2,U)c^i(X_2,U)\| &\leq L_{bc}\|X_1-X_2\|.
	\end{align*}
	for all $U\in\mathcal{A}$ and $X_1,X_2\in\mathcal{A}$.
\end{enumerate}
These assumptions imply, that for all  $X,U\in\mathcal{A}$,
\begin{align*}
	\|b^i(X,U)c^i(X,U)\| &\leq \|b^i(0,U)c^i(0,U)\| + L_b\|X\|, i=1,\dots,d
\end{align*} 
 \begin{rem} By the same reasons as given in \eqref{rem:aisC1}, we will need $b^i, c^i$ to be $C^1$, in order to define the Hamiltonian. At this point continuous (and operator Lipschitz) is sufficient.
\end{rem}
\subsection{Set of Controls}\label{sec:setofcontrols}
Given drift functions $a$ and diffusion function $b^i, c^i, i=1,\dots d$ as defined by assumptions (a)-(d). 
\begin{defn}
	 For $X_0\in\mathcal{A}^{sa}$ the set   $\mathcal{U}(X_0)\subseteq \mathcal{A}^{sa}$ is defined as the set of controls $U=(U_t)_{t\in [0,T]}$, such that
	\begin{equation}\label{eq:Cond-for-a-and-b-for-controlled-existence}
		\int_0^T \|a(0,U_s)\|^2 + \|b^i(0,U_s)c^i(0,U_s)\|^2ds < \infty.
	\end{equation}
	When the context is clear we simply write $\mathcal{U}$ and hide the dependency on the initial value $X_0\in\mathcal{A}^{sa}$.
\end{defn}

\begin{rem}
	Note that the selection of the evaluation at value $ 0\in\mathcal{A}$ in \eqref{eq:Cond-for-a-and-b-for-controlled-existence} is arbitrary. Any fixed value can be taken. The dependency on $U$ requires additional constraints to keep drift and diffusion bounded. Condition \eqref{eq:Cond-for-a-and-b-for-controlled-existence} ensures global existence of a solution of the forward equation (\Cref{thm:fCSDE_globalexistence}) as well as a well defined functional objective (see \Cref{sec:DiffProc:FunObj}).
\end{rem}

\subsection{Controlled fSDE}\label{sec:DiffProc:SysDyn}
A local existence theorem for initial value problems of fSDE was given in \cite[Theorem 3.1]{Kargin}. Local existence (and uniqueness) was proven by a Picard iteration assuming locally operator Lipschitz drift and diffusions in operator norm. 
We extend the drift and diffusion coefficients in the fSDE by the control processes $(U_t)_{t\in [0,T]}$. In \cite{gangbo2025viscsol} an existence theorem was stated for controlled fSDEs, where the underlying fSDE was equipped with a constant diffusion coefficient.
\begin{defn} \label{def-freeItoProcess-controlled}
	Let the initial value $X_0$ be a self-adjoint element  in $\mathcal{A}^{sa}$ and $a,b^i,c^i:\mathcal{A}^{sa}\times\mathcal{A}^{sa}\rightarrow \mathcal{A}$ fulfilling conditions (a) to (d). Let $U=(U_t)_{t\in[0,T]}$ a control. We call 
	\begin{equation}\label{intro-freeCSDE-diffform}
		dX_t=a(X_t,U_t)dt+ \sum\limits_{i=1}^d b^i(X_t,U_t)dW_tc^i(X_t,U_t),~ t\geq 0
	\end{equation}
	a  free controlled stochastic differential equation (fCSDE).	
	A solution to \eqref{intro-freeCSDE-diffform} with initial condition $X_0\in\mathcal{A}^{sa}$ at $t=0$ is a process $(X_t)_{t\in[0,T]}$ with the following properties:
	\begin{enumerate}
		\item $X_0 \in \mathcal{A}_0^{sa}$
		\item $X_t\in\mathcal{A}_t^{sa}$ for all $t\in[0,T]$ and $U_t\in \mathcal{U}(X_0)$
		\item The equation
		\begin{equation}\label{intro-freeCSDE-inform}
			X_t=X_0 + \int_{0}^t a(X_s,U_s)ds + \sum\limits_{i=1}^d\int_{0}^tb^i(X_s,U_s)dW_sc^i(X_s,U_s)
		\end{equation} 
		is fulfilled for all $t\in[0,T]$ and $(U_t)_{t\in[0,T]}\in \mathcal{U}(X_0)$.
	\end{enumerate}
		The notation $X_t^{X_0}$ is used to express the dependency of the solution of the inital value at $t=0$.
\end{defn}

\begin{thm}\label{thm:fCSDE_globalexistence} Let $0<T<\infty$. Consider a fCSDE as defined in \eqref{intro-freeCSDE-diffform} on the finite interval $[0,T]$. Assume conditions (a) - (d) listed in  \Cref{sec:DiffProc:Drift} and \Cref{sec:DiffProc:Diffusion} on the drift function $a$ and diffusion functions $b^i, c^i$, $i=1,\dots d$. Additionally, let
\begin{equation*}
	\int_0^T \|a(0,U_s)\|^2 + \|b^i(0,U_s)c^i(0,U_s)\|^2ds < \infty.
\end{equation*}
Then for every $X_0 \in \mathcal{A}$, there is a unique solution $X_t$ of \eqref{intro-freeCSDE-diffform} on $[0,T]$. The solution $X_t$ is uniformly bounded in $L_\infty(\varphi)$, i.e. $\|X_t^{X_0}\|\leq M(X_0)<\infty$, where the constant $M=M(X_0)$ depends on the initial value $X_0\in\mathcal{A}$.
\end{thm}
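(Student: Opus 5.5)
The plan is to reduce the controlled equation to the uncontrolled setting of \Cref{thm:fSDE_globalexistence} and \Cref{cor:fSDE_global_general}. Freeze the control: set $\tilde a(t,X)=a(X,U_t)$ and $\tilde b^i(t,X)=b^i(X,U_t)$, $\tilde c^i(t,X)=c^i(X,U_t)$. These coefficients are now time dependent, but by (b) and (d) they are Lipschitz in $X$ with constants $K$ and $L_{bc}$ independent of $t$. Their growth is controlled by the time-dependent quantities $\alpha_1(t)=\|a(0,U_t)\|$ and $\alpha_2(t)=\max_i\|b^i(0,U_t)c^i(0,U_t)\|$, both in $L^2([0,T])$ by the integrability assumption. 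Self-adjointness of the drift and of the diffusion increments is preserved by (a) and (c). As in \Cref{cor:fSDE_global_general}, I would use the polarization identity $bdWc+cdWb=(b+c)dW(b+c)-bdWb-cdWc$ so that, after symmetrizing, each diffusion term has the form $b\,dW\,b$ (or is handled directly via $bc$).

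First, local existence and uniqueness. I would rerun the Picard iteration of \cite[Theorem 3.1]{Kargin} on a short interval $[t_0,t_0+\delta]$ in the Banach space of adapted, norm-continuous, self-adjoint processes with the sup-operator norm. The drift contributes $\int_{t_0}^{t}\|a(X_s,U_s)-a(Y_s,U_s)\|ds\le K\delta\sup\|X-Y\|$. The stochastic integral is estimated by the free Burkholder--Gundy inequality \cite{Biane1998}, which in the form used in \Cref{lem:fSDE_global_uniform_bound} involves the products $b^ic^i$ and hence (d). Measurability in $t$ of the frozen coefficients follows from continuity of $a,b^i,c^i$ and strong measurability of $U$. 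The only change from the autonomous case is that the self-map estimate uses $\int_{t_0}^{t_0+\delta}\alpha_j(s)\,ds\le\sqrt{\delta}\,\|\alpha_j\|_{L^2}$, which still tends to $0$ as $\delta\to 0$.

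Second, an a priori bound, repeating \Cref{lem:fSDE_global_uniform_bound} with time-dependent constants. We have
\[
\|X_t\|\le\|X_0\|+\int_0^t\bigl(\alpha_1(s)+K\|X_s\|\bigr)ds+2\sqrt2\,d\Bigl(\int_0^t\bigl(\alpha_2(s)+L_{bc}\|X_s\|\bigr)^2ds\Bigr)^{1/2}.
\]
Squaring, and applying Cauchy--Schwarz and $(x+y)^2\le2x^2+2y^2$, gives
\[
\|X_t\|^2\le C\Bigl(\|X_0\|^2+\int_0^T(\alpha_1^2+\alpha_2^2)ds\Bigr)+C'\int_0^t\|X_s\|^2ds .
\]
Gronwall then yields $\|X_t\|^2\le M(X_0)^2$ on $[0,T]$, and this is where the integrability condition on $U$ enters. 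Uniqueness follows the same way: take the difference of two solutions, apply Burkholder--Gundy and the Lipschitz bounds, and conclude with Gronwall with zero forcing.

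Third, continuation as in \Cref{thm:fSDE_globalexistence}. The local existence time depends only on $K$, $L_{bc}$, an operator-norm bound $A$ on the initial value, and the modulus $\delta\mapsto\sup_{t_0}\int_{t_0}^{t_0+\delta}\alpha_j$, which is uniform in $t_0$ by absolute continuity of the integral. Taking $A=M(X_0)+1$ gives a uniform step length, so finitely many restarts cover $[0,T]$.

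The main obstacle is the Burkholder--Gundy step. The coefficients are Lipschitz only through the product $b^ic^i$, not individually, while the inequality in $L_\infty$ naturally involves $\|b^i\|\,\|c^i\|$. I would handle this exactly as the paper does for the uncontrolled case: pass to the symmetric form via polarization and estimate $\|b\,dW\,b\|$ by $\|b^2\|$. I would accept that reduction rather than re-derive it.
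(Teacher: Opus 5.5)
\textbf{Verdict: your proposal follows the paper's approach but has a genuine gap, and the paper's proof has the same gap.}

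Your architecture is the paper's: an a priori bound via Burkholder--Gundy and Gronwall, a local Picard solution, and continuation with a uniform step. Your handling of the time dependence through frozen coefficients, and of the uniform restart length, is more careful than the paper, which just cites Kargin's autonomous local theorem.

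The gap is the step you chose to accept. It affects the contraction estimate of the Picard iteration and your uniqueness argument, not only the growth bound. Write $\beta_s(X)=b(X_s,U_s)\otimes c(X_s,U_s)$. Burkholder--Gundy gives
\[
\Bigl\|\int_{t_0}^{t}\bigl(\beta_s(X)-\beta_s(Y)\bigr)\#dW_s\Bigr\|\le 2\sqrt2\,\Bigl(\int_{t_0}^{t}\|\beta_s(X)-\beta_s(Y)\|^2_{L^\infty(\varphi\otimes\varphi^{op})}\,ds\Bigr)^{1/2}.
\]
Condition (d) only controls the image $b(X_s,U_s)c(X_s,U_s)-b(Y_s,U_s)c(Y_s,U_s)$ of this tensor under multiplication, which discards exactly the information needed. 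So your sentence that the stochastic-integral estimate ``involves the products $b^ic^i$ and hence (d)'' is false for differences.

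The paper's symmetrization does not rescue this. It only yields the identity $\|b\otimes b\|=\|b\|^2=\|b^2\|$ for self-adjoint $b$, which is a growth bound, not a Lipschitz bound. Two examples:
\begin{itemize}
\item In $M_2(\C)$, take $b_1=\mathrm{diag}(1,-1)$ and $b_2=1$. Then $b_1^2=b_2^2$, but $\|b_1\otimes b_1-b_2\otimes b_2\|=2$.
\item Take $b=\sqrt h$ with $h\ge0$ smooth, constant outside a compact set, and $h(x)=x^2$ near $0$. Then $b^2$ is operator Lipschitz. But slicing $b(X)\otimes b(X)$ with a suitable normal state shows that $X\mapsto b(X)\otimes b(X)$ is not even locally Lipschitz near $0$, because $|x|$ is not operator Lipschitz.
\end{itemize}
Your displayed growth bound also does not follow from (d) when $b^i\neq c^i$. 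Polarization needs linear growth of $b^2$, $c^2$ and $(b+c)^2$, while (d) says nothing about $\|b\|\,\|c\|$: take $bc=0$ with $\|b\|\,\|c\|$ arbitrarily large.

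The paper delegates local existence to Kargin, whose theorem, as the paper itself notes, assumes the diffusion coefficients themselves are locally operator Lipschitz. Condition (d) does not provide that. So the repair belongs in the hypothesis rather than in the argument:
\begin{itemize}
\item Assume $X\mapsto b^i(X,U)\otimes c^i(X,U)$ is Lipschitz into $L^\infty(\varphi\otimes\varphi^{op})$, uniformly in $U$.
\item Assume $\int_0^T\|b^i(0,U_s)\|^2\|c^i(0,U_s)\|^2ds<\infty$.
\item Let the diffusion terms come in adjoint pairs $b\,dW\,c+c\,dW\,b$. Your claim that (c) makes each increment self-adjoint is false for a single term, since $(b\,dW\,c)^*=c\,dW\,b$.
\end{itemize}
Then Burkholder--Gundy applied to differences gives $\sup_{r\le t}\|\Phi(X)_r-\Phi(Y)_r\|^2\le C\int_0^t\sup_{r\le s}\|X_r-Y_r\|^2ds$ for the Picard map $\Phi$. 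Iterating produces the factor $(CT)^n/n!$. A single fixed-point argument on all of $[0,T]$ then gives existence, uniqueness and the uniform bound at once, and you need neither the a priori lemma nor the continuation step.
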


\begin{proof}
	The proof is similar to the steps in \Cref{thm:fSDE_globalexistence} resp. \Cref{cor:fSDE_global_general}. The key point is to show a global uniform bound on the controlled fSDE on finite intervals. By similar arguments as in \Cref{cor:fSDE_global_general} we formulate the proof for a single diffusion term $b(X,U)dWb(X,U)$. Just as in the proof of \Cref{lem:fSDE_global_uniform_bound} we estimate
	\begin{align*}
		\|X_t\| \leq \underbrace{\|X_0\| + \int_0^t (\|a(0,U_s)\| + L_a\|X_s\|)ds}_{=C_0} + \underbrace{2\sqrt{2}\int_0^t (\|b^2(0,U_s)\| + L_b\|X_s\|)^2ds}_{=C_1}.
	\end{align*}
	Due to \eqref{eq:Cond-for-a-and-b-for-controlled-existence} we can find constants $\alpha_g,\beta_g>0$ such that $C_0$ and $C_1$ hold the following estimates. Let 
	\begin{align*}
		C_0^2&\leq4 \max \left(\|X_0\|^2, \int_0^t\left(\|a(0,U_s)\| + L_a\|X_s\|\right)^2ds\right)\\
		&\leq 8\max \left(\|X_0\|^2, \int_0^t\|a(0,U_s)\|^2+L_a^2\|X_s\|^2ds\right) \leq \int_0^t\alpha_g + \beta_g\|X_s\|^2ds,
	\end{align*}
	and 
	\begin{align*}
		C_1^2&\leq16 \int_0^t\left(\|b^2(0,U_s)\|^2 + L_b^2\|X_s\|^2\right)ds \leq \int_0^t\alpha_g + \beta_g\|X_s\|^2ds. 
	\end{align*}
	Gronwall yields the global bound on the solution. Global existence follows by the same arguments as in \Cref{cor:fSDE_global_general}.
\end{proof}

\subsection{Functional Objective}\label{sec:DiffProc:FunObj}

As in  the classical case, we consider running costs $f:\R\times \mathcal{A}\times \mathcal{U}\rightarrow \mathcal{A}$ and terminal costs $g:\mathcal{A}\rightarrow \mathcal{A}$.
\begin{enumerate}
	\item[(e)] Let $g\in C^1(\R)$ be concave. Note that due to \Cref{lem:concaveinA} we have $\varphi(g(Y)) \leq \varphi(f(X))+\varphi(g'(X)(Y-X))$, where $g'$ ist the derivative in $\R$.
	\item[(f)] Let $f:\R\times\mathcal{A}\times \mathcal{U}(X_0)\rightarrow\mathcal{A}$ be $C^1$, $f(\R\times \mathcal{A}^{sa}\times \mathcal{U})\subseteq \mathcal{A}^{sa}$ and 
	\begin{equation*}
		\|f(t,X,U)\| \leq \kappa(\|X\|) +  C_2(1+\|a(0,U)\|^2 + \|b^i(0,U)c^i(0,U)\|^2 )
	\end{equation*}
	for some $\gamma>0$, $C_1,C_2>0$ and some positive continuous  function $\kappa:\R\rightarrow \R^+_0$. Note that the solution $(X_t)_{t\in[0,T]}$ of our forward equation is uniformly bounded.
\end{enumerate}
Since the solution is uniformly bounded with $M(X_0)<\infty$ the condition \eqref{eq:Cond-for-a-and-b-for-controlled-existence} ensures, that for ever $X_0\in \mathcal{A}^{sa}$
\begin{equation}
	\varphi\left( \left| \int_{0}^T f(s,X_s^{X_0},U_s)ds\right| \right)<\infty.
\end{equation}
This implies, that $\mathcal{U}(X_0)$ does not depend on the initial value $X_0$. We write $\mathcal{U}$ in the following. Thus we can then define the cost associated to the control $U\in\mathcal{U}$ as  
\begin{equation}\label{eq:CostFunction}
	J(U)=\varphi \left( \int_0^T f(s, X_s^{X_0}, U_s)ds + g(X_T^{X_0})\right).
\end{equation}
\subsection{Stochastic Optimal Control Problem}\label{sec:DiffProc:StochOptimControl}
Let $T>0$ and $X_0\in\mathcal{A}_0$. Given the fSDE \eqref{intro-freeCSDE-inform}. Under conditions (a)-(f), the objective is to maximize the gain function $J$ over the set of admissible control processes $\mathcal{U}$, i.e.
\begin{equation}\label{eq:supcondition}
	\sup_{U\in\mathcal{U}}J(U).
\end{equation}
 Given an initial condition $X_0 \in \mathcal{A}_0$, we say that $\hat{U}$ is an optimal control if $J(\hat{U})=\sup_{U\in\mathcal{U}}J(U)$.
 
\section{Free Stochastic Maximum Principle}\label{sec:BSDEsandFreeOptimControl}
In this section we formulate the stochastic maximum principle to the optimal control problem defined in \Cref{sec:DiffProc:StochOptimControl}. First, we define in \Cref{sec:FreeMaxPrinc:BSDE}  the notion of a backward stochastic differential equation  and give a global existence theorem. In \Cref{sec:FreeStochMax_general} we develop the maximum principle, based on \cite[Theorem 6.4.6]{Pham2009}. Free backward stochastic differential equations already appeared in \cite{das2025freeprobabilisticframeworkdenoising} and \cite{dabrowski2016free}.

\subsection{Free Backward Differential Equation}\label{sec:FreeMaxPrinc:BSDE}

As usual, consider a free Brownian motion $(W_t)_{t\geq 0}$ adapted to the filtration $\mathbb{F}=(\mathcal{A}_t)_{t\geq 0}$. The definition of a free backward SDE relies on the martingale representation theorem \cite[Proposition 5.3.13]{Biane1998}. In the proof of \Cref{T:BSD1}, we construct a martingale $M_t$ which is expressed as a free stochastic integral in terms of so called biprocess $(Z_t)_{t\in[0,T]}$ (see \eqref{E:BSDE4} and \Cref{sec:freeCalc-BiprocStochInt}). The biprocess $(Z_t)_{t\in[0,T]}$ is valued in $L_2(\varphi \otimes \varphi^{op})$, we refer to \cite{Biane1998} and  \Cref{sec:freeCalc-BiprocStochInt}.\\

 The set  $L_2(\varphi)^{sa}\subset L_2(\varphi)$ contains all self-adjoint $L_2(\varphi)$ operators.
$\mathbb{L}_2(\varphi)^{sa}$ denotes the set of self-adjoint, strongly measurable processes $(Y_t)_{t\in[0,T]}$, valued in $L_2(\varphi)^{sa}$ and adapted to $\mathbb{F}$. Define $\mathcal{H}=\mathbb{L}_2(\varphi)^{sa} \times \mathcal{B}_2^{a}$ endowed with the norm 
$$\|(Y,Z)\|_S=\Bigl(\int_0^Te^{\alpha s}(\|Y_s\|_2^2+\|Z_s\|_{L_2(\varphi\otimes\varphi^{op})}^2)ds\Bigr)^{\frac1{2}}.$$
The scalar product on $\mathcal{H}$ is given by
$$
\langle (U,V),(Y,Z)\rangle_\mathcal{H}=\int_0^T e^{\alpha s}\langle U_s,Y_s \rangle_{L_2(\varphi)}ds + \langle V,Z \rangle_{\mathcal{B}_2^a}.
$$
The scalar product
in $B_2^a$ by $\langle U,V \rangle =\int_0^T \langle U_s,V_s \rangle_{L_2(\varphi)}ds$, see \cite{Biane1998}.\\

 Let $T>0$ and  $\xi\in \mathcal{A}^{\text{sa}}$.   Assume, that the function  $k:[0,T]\times L_2(\varphi)\times L_2(\varphi \otimes \varphi^{op})\longrightarrow L_2(\varphi)$ has the following properties:
\begin{itemize}
	\item $k$ is continuous and operator Lipschitz, s.t. there exists a $C_k>0$
	$$\|k(t,U_1,V_1)-k(t,U_2,V_2)\|_2\le C_k\left(\|U_1-U_2\|_2 + \|V_1-V_2\|_{L_2(\varphi\otimes\varphi^{op})}\right)$$ for all $U_1,U_2\in L_2(\varphi)$,  $V_1,V_2\in L_2(\varphi \otimes \varphi^{op})$ and $t\in[0,T]$
	\item $\varphi \left(\int_0^Tk(s,0,0)^2ds\right)<\infty$
	\item $k([0,T]\times L_2(\varphi)^{sa} \times L_2(\varphi\otimes\varphi^{op})^{sa}) \subseteq L_2(\varphi)^{sa}$
\end{itemize}
\begin{defn}\label{def:freeBSDE} 
	A free backward stochastic differential equation (fBSDE) on $[0,T]$  is a time reversed free stochastic differential equation 
	\begin{equation}\label{E:BSDE1}
		-dY_t=k(t,Y_t,Z_t)dt-Z_t\#dW_t,\ Y_T=\xi.
	\end{equation}
	$Z=(Z_t)_{t\in[0,T]}$ is a self-adjoint, adapted biprocess from $\mathcal{B}_2^{a}$,  $Y\in\mathbb{L}_2(\varphi)^{sa}$ and $k$ as defined above.
\end{defn}
As we will see in the proof of \Cref{thm:freestochprinciple-1}, the martingale representation theorem \cite[Theorem 5.3.13]{Biane1998} determines the diffusion of the backward differential equation.
We want to proof the existence of a free backward SDE on a finite interval $[0,T]$.

As in the classical BSDE we define the solution of (\ref{E:BSDE1}). 

\begin{defn}\label{D:BSDE1}
	A solution of the free BSDE (\ref{E:BSDE1}) is a pair $(Y,Z)\in \mathcal{H}$, s.t. for all $t\in[0,T]$
	\begin{equation*}
	Y_t=\xi+{\int_t^Tk(s,Y_s,Z_s)ds}-\int_t^TZ_s\#dW_s
	\end{equation*}
\end{defn}
With fixing the appropriate preliminaries we prove
\begin{thm}\label{T:BSD1}
	If the pair $(\xi,k)$ satisfies the conditions in \Cref{def:freeBSDE}, then there exists exactly one solution $(Y,Z)\in\mathcal{H}$ of the free BSDE (\ref{E:BSDE1}).
\end{thm}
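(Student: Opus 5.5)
The plan is to mimic the classical Pardoux--Peng argument: build a contraction on the Hilbert space $\mathcal{H}$ with the weighted norm $\|\cdot\|_S$, choosing $\alpha$ large. The main tool replacing the classical martingale representation theorem is the free one from \cite[Proposition 5.3.13]{Biane1998}. The free It\^{o} isometry for biprocesses, $\|\int_0^T Z_s\#dW_s\|_2^2=\int_0^T\|Z_s\|^2_{L_2(\varphi\otimes\varphi^{op})}ds$, plays the role of the classical isometry.

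\emph{Step 1 (the case where $k$ does not depend on $(Y,Z)$).} Let $h=(h_t)$ be adapted with $\varphi(\int_0^T h_s^2ds)<\infty$. Define the $L_2$-martingale
\begin{equation}\label{E:BSDE4}
M_t=\varphi\Bigl(\xi+\int_0^T h_s\,ds\,\Big|\,\mathcal{A}_t\Bigr),
\end{equation}
using the trace-preserving conditional expectation onto $\mathcal{A}_t$. By the martingale representation theorem, $M_t=M_0+\int_0^t Z_s\#dW_s$ for a unique $Z\in\mathcal{B}_2^a$. Set $Y_t=M_t-\int_0^t h_s ds$. Then $Y_T=\xi$ and $Y_t=\xi+\int_t^T h_sds-\int_t^T Z_s\#dW_s$, and $Y$ is adapted and in $\mathbb{L}_2(\varphi)^{sa}$. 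Self-adjointness of $Y$ follows because conditional expectation commutes with $^*$. Self-adjointness of $Z$, in the sense $Z^*=Z$ under the involution of $\mathcal{A}\otimes\mathcal{A}^{op}$ compatible with $(Z\#dW)^*$, follows from uniqueness in the representation, since $M^*=M$. Uniqueness in this step follows from uniqueness of the representation.

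\emph{Step 2 (the contraction).} For $(U,V)\in\mathcal{H}$, let $\Phi(U,V)=(Y,Z)$ be the solution from Step 1 with $h_s=k(s,U_s,V_s)$. This $h$ is square integrable by the Lipschitz bound and $\varphi(\int_0^T k(s,0,0)^2ds)<\infty$, and it is self-adjoint by the third assumption on $k$. Take two inputs and write $\delta Y,\delta Z,\delta h$ for the differences. Apply the free It\^{o} formula (\Cref{sec:freeItoFormula}) to $e^{\alpha t}\varphi(\delta Y_t^2)$. The It\^{o} correction from $\delta Z\#dW$ squared, after taking trace, is $\|\delta Z_t\|^2_{L_2(\varphi\otimes\varphi^{op})}dt$; this follows from the product rule $A\,dW B\,C\,dW D=AD\varphi(BC)dt$ summed over the tensor decomposition of $\delta Z$. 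The stochastic integral terms have zero trace. This gives
\begin{equation*}
\varphi(\delta Y_t^2)e^{\alpha t}+\int_t^Te^{\alpha s}\bigl(\alpha\|\delta Y_s\|_2^2+\|\delta Z_s\|^2\bigr)ds=2\int_t^Te^{\alpha s}\mathrm{Re}\,\varphi(\delta Y_s\,\delta h_s)ds .
\end{equation*}
Bound the right side by $\alpha\int e^{\alpha s}\|\delta Y_s\|_2^2ds+\frac1\alpha\int e^{\alpha s}\|\delta h_s\|_2^2ds$. Then use $\|\delta h_s\|_2^2\le 2C_k^2(\|\delta U_s\|_2^2+\|\delta V_s\|^2)$ to get
\begin{equation*}
\int_0^Te^{\alpha s}\|\delta Z_s\|^2ds\le\frac{2C_k^2}{\alpha}\|(\delta U,\delta V)\|_S^2 .
\end{equation*}
Integrating the pointwise bound on $\varphi(\delta Y_t^2)e^{\alpha t}$ over $[0,T]$ gives a similar estimate for $\int e^{\alpha s}\|\delta Y_s\|_2^2ds$, with constant $2TC_k^2/\alpha$. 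Choosing $\alpha>4C_k^2(1+T)$ makes $\Phi$ a strict contraction on $\mathcal{H}$. Since $\mathcal{H}$ is complete (self-adjoint adapted processes and $\mathcal{B}_2^a$ are closed), Banach's fixed point theorem gives existence and uniqueness.

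\emph{Main obstacle.} The delicate step is the It\^{o}/energy identity in Step 2. It requires justifying the free It\^{o} formula for $\varphi(Y_t^2)$ when $Y$ is only $L_2$ and $Z$ is a general $\mathcal{B}_2^a$ biprocess rather than a simple one. I would first prove the identity for simple biprocesses and bounded $h$, using the product formula of \Cref{sec:freeItoFormula}. I would then pass to the limit by density of simple biprocesses in $\mathcal{B}_2^a$ together with the $L_2$ isometry. A secondary point is checking that $Z$ can be chosen self-adjoint in the required sense, which I would handle by replacing $Z$ with its symmetrization $\frac12(Z+Z^*)$ and invoking uniqueness of the representation.
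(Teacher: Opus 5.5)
Your proposal is correct and follows the paper's proof essentially step for step. Both use the conditional expectation of $\xi+\int_0^T k(s,U_s,V_s)\,ds$ together with the free martingale representation theorem to define the map $(U,V)\mapsto(Y,Z)$, then apply the free It\^{o} formula to $e^{\alpha s}\tilde Y_s^2$ and take the trace to obtain a contraction on $\mathcal{H}$ in the weighted norm $\|\cdot\|_S$. The only difference is cosmetic: the paper keeps the $\alpha\|\tilde Y_s\|_2^2$ term on the left and absorbs via $2C_k y(u+v)\le 4C_k^2y^2+\frac12(u^2+v^2)$ with $\alpha=1+4C_k^2$, which avoids your extra integration of the pointwise bound and the resulting $T$-dependence of $\alpha$.
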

\begin{proof} We follow mainly the proof in \cite{Pham2009}, which is a Picard argument with an appropriate norm. 

	To prove the existence of a solution on $[0,T]$, we apply a fixpoint argument on the Hilbert space $\mathcal H$. For this we select $(U,V)\in \mathcal H$. Then we define $\Psi(U,V)=(Y,Z)$ as follows. For $t\in[0,T]$ let
	$$M_t=\mathbb E_{\mathcal F_t}(\xi+\int_0^Tk(s,U_s,V_s)ds).$$
	Due to the assumptions on $k$,  $M_t$ is well defined, self-adjoint and is a martingale in $L_2(\varphi)$. The martingale representation theorem \cite[Proposition 5.3.13]{Biane1998} shows, that there exists a $Z\in \mathcal B_2^a$ and $M_0\in\R$ s.t.
	\begin{equation}\label{E:BSDE2}
		M_t=M_0+\int_0^tZ_s\#dW_s.
	\end{equation}
	Then the integral $\int_0^t Z_s\#dW_s$ is self-adjoint, which implies, that $Z_s$ is self-adjoint (see the remark after \cite[Definition 2.2.1]{Biane1998}).
	Now define
	\begin{equation}\label{E:BSDE3}
		Y_t:=\mathbb E_{\mathcal F_t}(\xi+\int_t^Tk(s,U_s,V_s)ds)=M_t-\int_0^tk(s,U_s,V_s)ds
	\end{equation}
	Since $Y_T=\xi$ by definition and the representation of $(M_t)$  in (\ref{E:BSDE2}) we realize that
	\begin{equation}\label{E:BSDE4}
		Y_t=\xi+\int_t^Tk(s,U_s,V_s)ds-\int_t^TZ_s\#dW_s.
	\end{equation}
	
	Since $\xi \in \mathcal{A}^{sa}$ we have $\|\xi\|<\infty$.
	Due to the It\^{o} isometry of the free stochastic integral in $L_2(\varphi)$ (see \cite[Corollary 3.1.2]{Biane1998}), the assumptions on $k$ and $Z_s\in\mathcal B_2^a$ it follows  that  $Y\in \mathcal H$. We see that $(Y,Z)$ is a solution to (\ref{E:BSDE1}) if $(Y,Z)$ is fixed point to $\Psi$.\\
	Now let $(U,V),(U',V')\in \mathcal H$ and 
	$(Y,Z)=\Psi((U,V))$, $(Y',Z')=\Psi((U',V'))$ and $(\tilde{Y},\tilde{Z})=(Y-Y',Z-Z')$. In addition, $\tilde{k_t}=k(t,U_t,V_t)-k(t,U_t',V_t')$.
	
	Now let $\alpha >0$ and consider $s\longmapsto e^{\alpha s}(\tilde{Y}_s^2)$. Since the It\^{o} formula in product form is also valid in $L_2(\varphi)$ (\cite[Corollary 3.1.2]{Biane1998}), we obtain
	\begin{eqnarray*}
		d(Y_s^2)&=&(Y_s+dYs)(Y_s+dY_s)-Y_s^2\\
		&=&Y_sdY_s+(dY_s)Y_s+\varphi(Z_s^2)dt
	\end{eqnarray*}
	Hence
	$$d(e^{\alpha s}(Y_s)^2)=\alpha e^{\alpha s}(Y_s)^2+e^{\alpha s}\Bigl(Y_sdY_s+(dY_s)Y_s+
	\varphi(Z_s^2)dt\Bigr)$$
	Hence from \eqref{E:BSDE4}
	\begin{multline*}
		(\tilde{Y}_0)^2=-\int_0^Te^{\alpha s}\Bigl(\alpha (\tilde{Y}_s^2)-(\tilde{k}_s\tilde{Y}_s+\tilde{Y}_s\tilde{k}_s)\Bigr)ds-\int_0^Te^{\alpha s}\varphi(Z_s^2) ds
		-\\ -
		\int_0^Te^{\alpha s}\Bigl(\tilde{Y}_s\tilde{Z_s}\#dW_s+\tilde{Z_s}\#dW_s\tilde{Y}_s\Bigr)
	\end{multline*}
	Now
	$$N_t=\int_0^te^{\alpha s}\Bigl(\tilde{Y}_s\tilde{Z_s}\#dW_s+\tilde{Z_s}\#dW_s	\tilde{Y}_s\Bigr)$$
	is a bounded martingale in $\mathcal H$. Applying the trace to the equation above, we obtain
	\begin{eqnarray*}
		&&\varphi(\tilde{Y}_0^2)+\Bigl(\int_0^Te^{\alpha s}(\alpha \|\tilde{Y}_s\|_2^2+\|\tilde{Z}_s\|_{\mathcal B_2}^2ds\Bigr)
		=\varphi\Bigl(\int_0^Te^{\alpha s}(\tilde{k}_s\tilde{Y}_s+\tilde{Y}_s\tilde{k}_s)ds\Bigr)\\
		&\le&2C_k\int_0^Te^{\alpha s}\|\tilde{Y}_s\|_2\Bigl(\|\tilde{U}\|_2+\|\tilde{V}_s\|_{\mathcal B_2}\Bigr)ds\\
		&\le&4C_k^2\Bigl(\int_0^Te^{\alpha s}\|\tilde{Y}_s\|^2_2ds\Bigr)+\frac1{2}\Bigl(\int_0^Te^{\alpha s}(\|\tilde{U}_s\|_2^2+\|\tilde{V}_s\|_{\mathcal B_2}^2)ds\Bigr)
	\end{eqnarray*}
	where the last inequality results by applying the binomial formula on the integrand.
	Define $\alpha=1+4C_k^2$ then $\Phi$ is a contradiction, 
	\begin{equation*}
		\|(\tilde{Y},\tilde{Z})\|_S \leq \frac{1}{2}\|(\tilde{U},\tilde{V})\|_S.
	\end{equation*}
\end{proof}

\subsection{Free Stochastic Maximum Principle}\label{sec:FreeStochMax_general}
The difficulty in formulating a maximum principle in the free probability setting based on \Cref{thm:pontryagin-classic} is the notion of the generalized Hamiltonian $H$ (or shortly Hamiltonian in the following) and it's derivatives $H$ w.r.t to the state and control variable. In general, since the state of our system is valued in $\mathcal{A}$, we must expect $H$ to be an operator function. The study of derivatives of operator functions is a classical topic, see \cite[Chapter 5]{Skripka2019}. Derivatives are expressed in terms of so called multiple  operator integrals (MOI), which have a widespread use. It would be natural, to attempt to formulate the optimality condition in  \Cref{thm:pontryagin-classic} by help of MOIs. Nevertheless, it is expected, that the optimality condition results in a framework, which is difficult to apply, especially if one considers concrete examples and applications. Therefore our aim is to stay as much as possible in $\R$, that is defining a function $H:\R\rightarrow \R$ and it's derivatives such, that a maximum principle more or less similar to \Cref{thm:pontryagin-classic} will be obtained. In this section we show, that it is indeed possible to find a real valued function $H$, such that $H$ has  the necessary properties to obtain a free analog of the maximum principle. The overall attempt is aligned to the commutative case in it's nature, but two difficulties arise. First, the variable $Z$ in the diffusion of the backward differential equation (BSDE) is taken from the space of biprocesses $\mathcal{B}_2^a$, which means that the diffusion does not really have a nice representation.  Second, the It\^{o}-formula causes the Hamiltonian to be more complicated compared to the commutative case and difficult to work with. Nevertheless, we present linear examples, for which a explicit solution can be formulated.\\
Consider an optimal control problem as defined in \eqref{sec:DiffProc:StochOptimControl}. The forward equation is given by  \eqref{intro-freeCSDE-diffform}, i.e.
\begin{equation}
	dX_t=a(X_t,U_t)dt+ \sum\limits_{i=1}^d b^i(X_t,U_t)dW_tc^i(X_t,U_t),
\end{equation}
with $U\in\mathcal{U}$.\\
Before we start, we need the following important Lemma.
\begin{lem}\label{lem:concaveinA}
	Let $f\in C^2(\R)$ be concave. Let $X,Y\in\mathcal{A}^{sa}$. Then
	\begin{equation}
		\varphi(f(Y))\leq \varphi(f(X))+\varphi(f'(X)(Y-X)).
	\end{equation}
\end{lem}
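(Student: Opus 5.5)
The plan is to reduce the statement to a one-variable concavity argument along the segment from $X$ to $Y$, in the spirit of Klein's inequality. Set $H=Y-X\in\mathcal{A}^{sa}$, $X_t=X+tH$ for $t\in[0,1]$, and $g(t)=\varphi(f(X_t))$. All $X_t$ have spectrum in a fixed compact interval $I=[-R,R]$ with $R=\max(\|X\|,\|Y\|)$, so only the values of $f$ on $I$ matter. If we show that $g$ is concave and differentiable with $g'(0)=\varphi(f'(X)H)$, then $g(1)\le g(0)+g'(0)$ is exactly the claimed inequality.

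\emph{Step 1 (polynomial approximation).} Approximate $f$ in $C^2(I)$ by polynomials while keeping concavity. By Weierstrass, choose a polynomial $q$ with $|q-f''|\le\varepsilon$ on $I$. Since $f''\le 0$, the polynomial $q_\varepsilon:=q-\varepsilon$ satisfies $q_\varepsilon\le 0$. Integrate twice with matching initial data $p_\varepsilon(0)=f(0)$ and $p_\varepsilon'(0)=f'(0)$. Then $p_\varepsilon$ is a concave polynomial on $I$ with $p_\varepsilon\to f$ and $p_\varepsilon'\to f'$ uniformly on $I$. Functional calculus and $\|h(Z)\|\le\sup_I|h|$ for self-adjoint $Z$ with spectrum in $I$ then give $\varphi(p_\varepsilon(X_t))\to\varphi(f(X_t))$ and $\varphi(p_\varepsilon'(X)H)\to\varphi(f'(X)H)$. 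It therefore suffices to prove the inequality for a concave polynomial $p$.

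\emph{Step 2 (first derivative).} For a monomial, $\frac{d}{dt}\varphi(X_t^n)=\sum_{k}\varphi(X_t^{k}HX_t^{n-1-k})=n\,\varphi(X_t^{n-1}H)$ by traciality. Hence $g_p'(t)=\varphi(p'(X_t)H)$, where $g_p(t)=\varphi(p(X_t))$.

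\emph{Step 3 (concavity of $g_p$).} This is the main obstacle, since the derivative of $p'(X_t)$ in direction $H$ does not commute with $H$. Differentiating again,
\begin{equation*}
g_p''(t)=\sum_{k,m}c_{k,m}\,\varphi(X_t^kHX_t^mH),
\end{equation*}
where the coefficients $c_{k,m}$ are those of the divided difference $(p'(\lambda)-p'(\mu))/(\lambda-\mu)=\sum c_{k,m}\lambda^k\mu^m$. Let $E$ be the spectral measure of $X_t$ and define $\nu(A\times B)=\varphi(HE(A)HE(B))=\varphi(E(B)HE(A)HE(B))\ge0$. This extends to a finite positive measure on $I\times I$ (in the finite-dimensional picture its weights are $|\langle e_i,He_j\rangle|^2$; in general one uses normality of $\varphi$). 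With this measure,
\begin{equation*}
g_p''(t)=\int_{I\times I}\frac{p'(\lambda)-p'(\mu)}{\lambda-\mu}\,d\nu(\lambda,\mu),
\end{equation*}
where the integrand on the diagonal is interpreted as $p''$. By the mean value theorem the integrand equals $p''(\xi)\le0$, so $g_p''\le0$ and $g_p$ is concave. Therefore $g_p(1)\le g_p(0)+g_p'(0)$, which reads $\varphi(p(Y))\le\varphi(p(X))+\varphi(p'(X)(Y-X))$.

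\emph{Step 4 (limit).} Let $\varepsilon\to0$ using Step 1 to pass to the limit on both sides. This yields the statement of \Cref{lem:concaveinA}.
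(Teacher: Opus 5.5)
Your argument is correct, but it takes a different route from the paper's.

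\emph{The paper's argument.} It works pointwise: for each scalar $x$, the function $y\mapsto f(y)-f(x)-f'(x)(y-x)$ is $\le 0$. It then appeals to ``functional calculus'' to conclude that the corresponding operator is negative and so has non-positive trace. That step needs a joint functional calculus in the two variables $(x,y)$, so as written it is only justified when $X$ and $Y$ commute. For non-commuting $X,Y$, the operator $f(Y)-f(X)-f'(X)(Y-X)$ is in general not even self-adjoint; only its trace has a sign, and that is Klein's inequality.

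\emph{Your argument.} You show instead that $t\mapsto\varphi(f(X+tH))$ is concave, writing the second derivative as the integral of the divided difference of $p'$ against the positive measure $\nu$. This genuinely handles non-commuting $X,Y$ and uses traciality in an essential way. The cost is the polynomial approximation and the construction of $\nu$.

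For $\nu$, there is a more concrete justification than ``normality of $\varphi$''. Left and right multiplication by the spectral projections $E(\cdot)$ of $X_t$ are commuting projection-valued measures on $L^2(\varphi)$. Their product $\mathcal{E}$ therefore extends to a projection-valued measure on $I\times I$, and $\nu=\langle\mathcal{E}(\cdot)H,H\rangle_{L^2(\varphi)}$.

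\emph{A shorter rigorous version of the paper's idea.} The same device repairs the paper's approach directly. Integrate the scalar inequality $f(y)-f(x)-f'(x)(y-x)\le 0$ against the positive measure $\mu(A\times B)=\varphi(E_X(A)E_Y(B))=\|E_Y(B)E_X(A)\|_2^2$. Since $\int s(x)r(y)\,d\mu=\varphi(s(X)r(Y))$ for bounded Borel $s,r$, the integral is exactly $\varphi(f(Y))-\varphi(f(X))-\varphi(f'(X)(Y-X))$. This works for merely $C^1$ concave $f$ and needs no approximation.
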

\begin{proof}
Since $f$ is concave, the function $g(y)=f(y)-f(x)-f'(x)(y-x)\leq 0$ for arbitrary $x$. By functional calculus $g(Y)$ is a negative operator, therefore $\varphi(g(Y))\leq 0$. 
\end{proof}

\subsubsection{Hamiltonian function $H$}\label{sec:generalizedhamilton}

The main theorem is \Cref{thm:freestochprinciple-1}. As we will see, the proof literally follows the classical proof of \cite[Theorem 6.4.6]{Pham2009}. The difference lies in the It\^{o}-formula (\Cref{sec:freeItoFormula}), which has to be applied to the product of the solution $(X_t)$ of the forward equation and the solution process of the BSDE $(Y_t)$. This requires to calculate the  products of diffusion terms of the BSDE and the forward fSDE. Since the diffusion of $Y_t$ consists of a general biprocess $Z_t\in\mathcal{B}_2^a$, we use the representation of the adapted processes in the stochastic integrals on the full Fock space, as described in \Cref{sec:freeCalc-BiprocStochInt}, see also \cite[Chapter 2]{kummererspeicher}.\\

Let $Z=\sum_{j\in\Z}A_j\otimes B_j\in\mathcal{B}_2^a$.  
Note that $Z$ comes from the martingale representation of the self-adjoint process $(M_t)_{t\geq 0}$ in \eqref{E:BSDE2}. As described in the part after introducing \eqref{E:BSDE2} by the martingale representation theorem, $Z$ is self-adjoint. The It\^{o}-formula applied to the product $X_t Y_t$ (resp. $X_tY_t$) requires to calculate the product of the diffusion terms of the processes $(X_t)$ and $(Y_t)$. This product will appear in the generalized Hamiltonian as defined below. The It\^{o}-formula gives (using the abbreviation $b_t^i=b^i(X_t,U_t)$ )
\begin{equation}\label{sec:quadrvariation_ito_hamiltonian-l}
	\sum_{i=1}^d (b^i_tdW_t c^i_t) Z_t\# dW_t=
	 \sum_{i=1}^d (b^i_tdW_t c^i_t) \sum_{j\in\Z}A_{j,t}dW_t B_{j,t} = \sum_{i=1}^d\sum_{j \in\Z}b_t^i B_{j,t}\varphi(c_t^i A_{j,t})dt
\end{equation}
and
\begin{equation}\label{sec:quadrvariation_ito_hamiltonian-r}
	Z_t\# dW_t\sum_{i=1}^d (b^i_tdW_t c^i_t) =
	\sum_{j\in\Z}A_{j,t}dW_t B_{j,t}\sum_{i=1}^d (b^i_tdW_t c^i_t)  = \sum_{i=1}^d\sum_{j \in\Z}A_{j,t} c_t^i\varphi(b_t^i B_{j,t})dt
\end{equation}
\textbf{Definition of the Hamiltonian over $\R$:}
Consider $z\in \ell_2(\Z)\otimes \ell_2(\Z)$, i.e. $z=\sum_{j\in\Z}\alpha_j\otimes \beta_j$. Define $H:\mathbb{T}\times\R\times\R\times\R\times \ell_2(\Z)\otimes\ell_2(\Z)\rightarrow \R$ motivated by \eqref{sec:quadrvariation_ito_hamiltonian-l} resp. \eqref{sec:quadrvariation_ito_hamiltonian-r} as
\begin{equation}\label{eq:genHamilton_inR}
	H(t,x,u,y,z)=
	1/2(a(x,u)y+ya(x,u))+ \Sigma(x,u,z)  + f(t,x,u).
\end{equation}
$\Sigma$ is given by $\Sigma(x,u,z)=1/2(\Sigma_l(x,u,z) + \Sigma_r(x,u,z))$ where 
\begin{equation*}
\begin{aligned}
	\Sigma_l(x,u,z)&=\sum_{i=1}^d \sum_{j\in\Z}b^i(x,u)\beta_j\varphi(c^i(x,u) \alpha_j \cdot \textbf{1})
\end{aligned}
\end{equation*}
and
\begin{equation*}
	\begin{aligned}
		\Sigma_r(x,u,z)&=\sum_{i=1}^d \sum_{j\in\Z}\alpha_jc^i(x,u)_j\varphi(b^i(x,u) \beta_j \cdot \textbf{1})
	\end{aligned}
\end{equation*}
Since $\alpha_j, \beta_j\in \ell_2(\Z)$ the sum over $j$ converges due to the Cauchy-Schwarz inequality. Consider the  drift $a\in C^1(\R)$ and diffusion functions $b^i, c^i\in C^1(\R^2)$. Note that for the existence of a global solution of the fSDE we only require $a,b^i,c^i$ to be continuous and operator Lipschitz, which does not imply that they are $C^1$ (see \cite{AleksandrovPeller}, \cite{Skripka2019}).
Taking the derivative in $\R$ gives
\begin{equation}\label{eq:genHamilton_deriv_inR}
\begin{aligned}
		H_x(t,x,u,y,z) =
	1/2(a_x(x,u)y+ya_x(x,u)) + \Sigma_x(x,u,z)  + f_x(t,x,u),
\end{aligned}
\end{equation}
where $\Sigma_x=1/2(\Sigma_{l,x}+\Sigma_{r,x})$ and
\begin{equation*}
	\begin{aligned}
		\Sigma_{l,x}(x,u,z) &= 		
		\sum_{i=1}^d\sum_{j\in\Z} b_x^i(x,u) \beta_j\varphi(c^i(x,u) \alpha_j \cdot \textbf{1})
		+\\
		&+
		\sum_{i=1}^d\sum_{j\in\Z}b^i(x,u) \beta_j\varphi( c_x^i(x,u) \alpha_j \cdot \textbf{1}),
	\end{aligned}
\end{equation*}
and
\begin{equation*}
	\begin{aligned}
		\Sigma_{r,x}(x,u,z) &= 		
		\sum_{i=1}^d \sum_{j\in\Z}\alpha_jc_x^i(x,u)_j\varphi(b^i(x,u) \beta_j \cdot \textbf{1})
		+\\
		&+
		\sum_{i=1}^d \sum_{j\in\Z}\alpha_jc^i(x,u)_j\varphi(b_x^i(x,u) \beta_j \cdot \textbf{1}).
	\end{aligned}
\end{equation*}

\textbf{Definition of the Hamiltonian over $\mathcal{A}$:} Let $X,U,Y,Z\in\mathcal{A}^{sa}$. 
We apply the notation $\{X,Y\}=1/2(XY+YX)$ for better readability. Note that $\varphi(\{X,Y\})=\varphi(XY)$. As mentioned before, Consider the  drift $a\in C^1(\R)$ and diffusion functions $b^i, c^i\in C^1(\R^2)$.\\
Under application of the functional calculus we consider  $H:\R\times\mathcal{A}\times\mathcal{A}\times\mathcal{A}\times\mathcal{A}\rightarrow \mathcal{A}$ given by
\begin{equation}\label{eq:genHamilton_inA}
	H(t,X,U,Y,Z)=
	\{a(X,U),Y\}+ \Sigma(X,U,Z) + f(t,X,U),
\end{equation}
where $\Sigma(X,U,Z)=1/2(\Sigma_l(X,U,Z)+\Sigma_r(X,U,Z))$ and
\begin{equation*}
	\Sigma_l(X,U,Z)=\sum_{i=1}^d\sum_{j\in\Z}b^i(X,U) B_j\varphi(c^i(X,U) A_j)
\end{equation*}
and
\begin{equation*}
		\Sigma_r(x,u,z)=\sum_{i=1}^d \sum_{j\in\Z}A_jc^i(X,U)\varphi(b^i(X,U) B_j).
\end{equation*}
Due to the definition of $\Sigma$ and the assumptions on $f$ (condition (f) in \Cref{sec:DiffProc:FunObj}), the Hamiltonian $H$ fulfills $H(\R\times\mathcal{A}^{sa}\times\mathcal{A}^{sa}\times\mathcal{A}^{sa}\times\mathcal{A}^{sa})\subseteq \mathcal{A}^{sa}$. Taking the real derivative w.r.t to $X$ yields \begin{equation}\label{eq:genHamilton_deriv_inA}
	H_x(t,X,U,Y,Z)=
	\{a_x(X,U),Y\}+ \Sigma_x(X,U,Z) + f_x(t,X,U), 
\end{equation}
where $\Sigma_x(X,U,Z)=1/2(\Sigma_{l,x}(X,U,Z)+\Sigma_{r,x}(X,U,Z))$ and
\begin{equation*}
	\begin{aligned}
		\Sigma_{l,x}(X,U,Z)&=\sum_{i=1}^d\sum_{j\in\Z} b_x^i(X,U) B_j\varphi(c^i(X,U) A_j)+ \\
		&+\sum_{i=1}^d\sum_{j\in\Z}b^i(X,U) B_j\varphi(c_x^i(X,U) A_j)
	\end{aligned}
\end{equation*}
and
\begin{equation*}
	\begin{aligned}
		\Sigma_{r,x}(x,u,z) &= 		
		\sum_{i=1}^d \sum_{j\in\Z}A_jc_x^i(X,U)\varphi(b^i(X,U)B_j)
		+\\
		&+
		\sum_{i=1}^d \sum_{j\in\Z}A_jc^i(X,U)\varphi(b_x^i(X,U) B_j).
	\end{aligned}
\end{equation*}
In many cases, the underlying fSDE consist of diffusion terms either as a single summand, or double summand. 
\begin{itemize}
	\item If the diffusion term of \eqref{intro-freeCSDE-diffform} is given by  $b(X_t, U_t)dW_t+dW_tb(X_t, U_t)$, then \eqref{eq:genHamilton_inA} simplifies to 
	\begin{align*}
		\Sigma_l(X,U,Z)&=\sum_{j\in\Z}b(X,U)B_j\varphi(A_j)+B_j\varphi(b(X,U)A_j)\\
		\Sigma_r(X,U,Z)&=\sum_{j\in\Z}A_j\varphi(b(X,U)B_j)+A_jb(X,U)\varphi(B_j)
	\end{align*}

	\item If the diffusion functions $b^i, c^i$ do not depend on $X$, then $\Sigma=\Sigma(U,Z)$.
	\item If the diffusion functions $b^i, c^i$ do not depend on $U$, then $\Sigma=\Sigma(X,Z)$.
	\item If the diffusion functions $b^i, c^i$ do not depend on $X$ and $U$, i.e. are constant, then $\Sigma=\Sigma(Z)$.	
\end{itemize}
As a consequence of \Cref{lem:concaveinA}, we have the following result, which is crucial for the proof of \cref{thm:freestochprinciple-1}.
\begin{lem}
	Consider the generalized Hamiltonian over $\R$ defined in \eqref{eq:genHamilton_inR}. If $(x,u)\rightarrow H(x,u,y,z)$ is concave, then
	\begin{equation*}
		\varphi\left(H(t,\hX,\hU_t,\hY_t, \hZ_t)-
		H(t,X_t,U_t,\hY_t, \hZ_t)-H_x(t,\hX_t,\hU_t,\hY_t, \hZ_t)(\hX_t-X_t)\right)\geq 0 
	\end{equation*}
	for all $t\in[0,T]$, $\hat{X}_t, X_t, Y_t \in \mathcal{A}$, $Z_t\in\mathcal{B}_2^a$ and $\hat{U}_t,U_t\in\mathcal{U}$.
\end{lem}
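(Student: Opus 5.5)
The plan is to reduce the statement to \Cref{lem:concaveinA}, applied to a real function built from the Hamiltonian over $\R$ with the adjoint data frozen. Fix $t\in[0,T]$ and the adjoint quantities $\hY_t,\hZ_t$. For the real computation, replace them by scalar parameters $y$ and $z=\sum_j\alpha_j\otimes\beta_j$, so that
\[
h(x,u):=H(t,x,u,y,z)
\]
is a real function of $(x,u)$ which, by hypothesis, is concave. Concavity together with differentiability in $x$ gives the pointwise inequality
\[
h(\hat x,\hat u)-h(x,u)-h_x(\hat x,\hat u)(\hat x-x)\;\geq\; h_u(\hat x,\hat u)(\hat u-u).
\]
The term on the right is the one that the maximum condition in \Cref{thm:freestochprinciple-1} will later control. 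If the statement is read literally, without a $u$-term, I would instead use concavity in $x$ alone, with $u$ fixed to the same value in both places. The intended reading must therefore be settled first.

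Next I transfer the inequality to $\mathcal{A}$ by functional calculus, exactly as in the proof of \Cref{lem:concaveinA}. Define the real function
\[
g(x)=h(\hat x,\hat u)-h(x,u)-h_x(\hat x,\hat u)(\hat x-x),
\]
whose sign is known. Insert the self-adjoint operators $\hX_t,X_t,\hU_t,U_t$ into $g$. This produces a positive operator, and faithfulness and positivity of $\varphi$ then yield the trace inequality.

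The main obstacle is that this substitution is only legitimate when all the operators commute. The functions involve several operator variables, and $\hX_t, X_t,\hU_t,U_t,\hY_t$ do not commute in general, so the composite $H(t,X_t,U_t,\hY_t,\hZ_t)$ is not the functional calculus of a single real function. I would handle this in three steps.
\begin{enumerate}
\item Split $H$ into its three pieces $\{a(X,U),Y\}$, $\Sigma(X,U,Z)$ and $f(t,X,U)$, and use traciality to simplify each. The identity $\varphi(\{a,Y\})=\varphi(aY)$ removes the symmetrisation. In $\Sigma$, the scalar factors $\varphi(c^iA_j)$ and $\varphi(b^iB_j)$ are just numbers. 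This reduces each trace to a term of the form $\varphi(F(X,U)\,K)$, where $K$ is a fixed operator built from $\hY_t$, $\hZ_t$ and the coefficients $A_j,B_j$.
\item Apply the concavity inequality of \Cref{lem:concaveinA} to each single-variable factor.
\item Justify positivity of the weights $K$, which is where the concavity assumption on the combined $H$ has to be used.
\end{enumerate}
If step 3 fails in general, I would state the lemma under the additional assumption that the relevant operators commute, or that they lie in a common abelian subalgebra at each time $t$. In that case the joint functional calculus applies directly and the argument of \Cref{lem:concaveinA} goes through verbatim. Convergence of the sums over $j$ in $\Sigma$ follows from Cauchy--Schwarz in $L_2(\varphi\otimes\varphi^{op})$ together with the uniform bound on $X_t$ from \Cref{thm:fCSDE_globalexistence}.
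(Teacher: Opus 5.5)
The paper offers no argument beyond ``as a consequence of \Cref{lem:concaveinA}'', which is exactly the functional-calculus transfer you start from. You are right to distrust it, but your proposal does not close the gap, and the statement cannot be proved as written.

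The $u$-term is not a question of reading. Take $a(x,u)=u$, $b^i=c^i=0$, $f=-\tfrac12u^2$ and $\hY_t=0$. Then $H=-\tfrac12u^2$ is concave and $H_x=0$, and $\hU_t=\mathbf 1$, $U_t=0$ give the value $-\tfrac12$, already for scalars. So the maximum condition \eqref{eq:optt_cond_Pontryagin} must become a hypothesis. Freezing $u$ proves something useless for \Cref{thm:freestochprinciple-1}, where $U$ is an arbitrary competitor. Moreover, converting the trace maximum into $\varphi\bigl(H_u(t,\hX_t,\hU_t,\hY_t,\hZ_t)(\hU_t-U_t)\bigr)\ge 0$ requires $H_u$ to be the true gradient of $U\mapsto\varphi(H(t,\hX_t,U,\hY_t,\hZ_t))$. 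The functional-calculus derivative is not that gradient once $\hU_t$ fails to commute with $\hX_t,\hY_t$: the gradient of $\varphi(U^3Y)$ is $U^2Y+UYU+YU^2$, not $\tfrac32(U^2Y+YU^2)$. \Cref{rem:h_u_optim_remark} only covers a single-variable function under the trace.

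Your steps 2--3 fail for a structural reason. Positivity of the weights $K$ is not what is needed, since $X\mapsto\varphi(F(X)K)$ is concave for all $K\ge0$ only when $F$ is operator concave. Take $a(x,u)=-x^4$ and $b^i=c^i=f=0$; the real $H=-x^4y$ is concave for every $y\ge0$. In a copy of $M_2(\C)\subset\mathcal A$, let $A=\mathrm{diag}(0,1)$ and $V=\bigl(\begin{smallmatrix}2&1\\1&0\end{smallmatrix}\bigr)$. Then $\frac{d^2}{ds^2}(A+sV)^4\big|_{s=0}=2\bigl(\begin{smallmatrix}1&2\\2&3\end{smallmatrix}\bigr)\not\geq 0$. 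Hence, for $\hY_t$ the projection onto an eigenvector of the negative eigenvalue, $s\mapsto\varphi(H(t,A+sV,U,\hY_t,\hZ_t))$ is strictly convex near $0$. If the claimed inequality held for all $X,\hX$, then $X\mapsto\varphi(H(t,X,U,\hY_t,\hZ_t))$ would be a pointwise minimum of affine functionals, hence concave. So the inequality fails for some pair, whatever $H_x$ is. \Cref{lem:concaveinA} itself survives only because it has no weight: it is Klein's inequality, proved via the positive measure $\varphi(E_Y(dy)E_X(dx))$, not by substituting two non-commuting operators into $g$.

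Your commutative fallback also fails. In $\Sigma$, the factors $\varphi(c^i(X,U)A_j)$ and $\varphi(b^i(X,U)B_j)$ are not constants; they move with $(X,U)$, so $\Sigma(X,U,\hZ_t)$ is not a functional calculus of the real $\Sigma$. Take $b=c=x$, $a=f=0$ and $\hZ_t=-\mathbf 1\otimes\mathbf 1$; the real $H=-x^2$ is concave. Yet $\varphi(H(X))=-\varphi(X)^2$ and $H_x(\hX)=-\varphi(\hX)\mathbf 1-\hX$. For $\varphi(\hX)=0$, $\varphi(\hX^2)=1$ and the commuting choice $X=2\hX$, the expression equals $-1$.

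The missing idea is a reformulation:
\begin{enumerate}
\item assume concavity of the trace functional $(X,U)\mapsto\varphi(H(t,X,U,\hY_t,\hZ_t))$ on $\mathcal A^{sa}\times\mathcal A^{sa}$;
\item define $H_x,H_u$ as its actual (double-operator-integral) gradients;
\item add the maximum condition over a convex control set.
\end{enumerate}
The lemma is then just the supergradient inequality combined with the first-order condition in $u$.
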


\begin{defn}\label{def:BSDE}
	Let $t\in[0,T]$ and $Z\in \mathbb{B}_2^a$. Consider the Hamiltonian $H$ given in \eqref{eq:genHamilton_inA} and the derivative $H_x$ in  \eqref{eq:genHamilton_deriv_inA}. For each $U\in\mathcal{U}$, the backward SDE (BSDE)
	\begin{equation}\label{eq:BSDE}
		-dY_t=H_x(t,X_t,U_t,Y_t,Z_t)dt-Z_t\#dW_t,~Y_T=g_x(X_T)
	\end{equation}
	is called the adjoint equation of the optimal control problem defined in \Cref{sec:defoptimcontrolproblem}.
\end{defn}

Now we are in the position to formulate the stochastic version of the stochastic maximum principle in the free probability setting.
\begin{thm}\label{thm:freestochprinciple-1}
	Consider the fSDE \eqref{intro-freeCSDE-inform}, BSDE \eqref{eq:BSDE} and cost functional as defined in \Cref{sec:DiffProc:FunObj}. Let $\hat{U}\in\mathcal{U}$ and $\hat{X}$ the associated controlled diffusion. Assume $(\hat{Y},\hat{Z})$ is a solution of \eqref{eq:BSDE} such that
	\begin{equation}\label{eq:optt_cond_Pontryagin}
			\varphi \left(H(t,\hX_t,\hat{U}_t,\hY_t,\hZ_t)\right)
			=
			\max_{U\in \mathcal{U}}\varphi \left(H(t,\hX_t,U,\hY_t,\hZ_t)\right).
	\end{equation}
	If $(x,u)\rightarrow H(x,u,y,z)$  is concave, then $\hat{U}$ is an optimal control, i.e.
	\begin{equation*}
		J(\hat{U})=\sup_{U\in \mathcal{U}}J(U).
	\end{equation*}
\end{thm}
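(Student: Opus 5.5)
We follow the classical argument of \cite[Theorem 6.4.6]{Pham2009}. Fix an arbitrary $U\in\mathcal{U}$ with associated state $X$ (exists by \Cref{thm:fCSDE_globalexistence}), and write
\begin{equation*}
J(\hat U)-J(U)=\varphi\Bigl(\int_0^T \bigl(f(s,\hX_s,\hU_s)-f(s,X_s,U_s)\bigr)ds\Bigr)+\varphi\bigl(g(\hX_T)-g(X_T)\bigr).
\end{equation*}
The aim is to show this difference is nonnegative. For the terminal term we apply \Cref{lem:concaveinA} (concavity of $g$, condition (e)) with the roles chosen so that
\begin{equation*}
\varphi\bigl(g(\hX_T)-g(X_T)\bigr)\geq \varphi\bigl(g_x(\hX_T)(\hX_T-X_T)\bigr)=\varphi\bigl(\hY_T(\hX_T-X_T)\bigr),
\end{equation*}
using the terminal condition $\hY_T=g_x(\hX_T)$ of the adjoint BSDE \eqref{eq:BSDE}.

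Next we expand $\varphi(\hY_T(\hX_T-X_T))$ by the free It\^{o} product formula (\Cref{sec:freeItoFormula}) applied to $\hY_t(\hX_t-X_t)$, or rather to its symmetrization $\{\hY_t,\hX_t-X_t\}$, since $\varphi$ of both agree. Here $d(\hX-X)$ has drift $a(\hX,\hU)-a(X,U)$ and diffusion $\sum_i \hat b^i dW \hat c^i-\sum_i b^i dW c^i$, while $d\hY=-H_x(t,\hX,\hU,\hY,\hZ)dt+\hZ\#dW$. The cross-variation terms are exactly \eqref{sec:quadrvariation_ito_hamiltonian-l} and \eqref{sec:quadrvariation_ito_hamiltonian-r}, so after taking $\varphi$ they produce $\varphi(\Sigma(\hX,\hU,\hZ)-\Sigma(X,U,\hZ))$; the drift terms give $\varphi(\{a(\hX,\hU)-a(X,U),\hY\})$ and $-\varphi(H_x(\hX_t-X_t))$. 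The stochastic integral parts have zero trace: they are $L_2$-martingales starting at $0$, justified by uniform operator-norm boundedness of $\hX,X$ (\Cref{thm:fCSDE_globalexistence}), $\hY\in\mathbb{L}_2(\varphi)^{sa}$, $\hZ\in\mathcal{B}_2^a$ and the It\^{o} isometry. Since $\hX_0=X_0$, we obtain
\begin{equation*}
\varphi\bigl(\hY_T(\hX_T-X_T)\bigr)=\int_0^T\varphi\Bigl(\{a(\hX_s,\hU_s)-a(X_s,U_s),\hY_s\}+\Sigma(\hX_s,\hU_s,\hZ_s)-\Sigma(X_s,U_s,\hZ_s)-H_x(s,\hX_s,\hU_s,\hY_s,\hZ_s)(\hX_s-X_s)\Bigr)ds.
\end{equation*}
Adding the running cost and recognizing the definition \eqref{eq:genHamilton_inA} of $H$ gives
\begin{equation*}
J(\hat U)-J(U)\geq \int_0^T\varphi\Bigl(H(s,\hX_s,\hU_s,\hY_s,\hZ_s)-H(s,X_s,U_s,\hY_s,\hZ_s)-H_x(s,\hX_s,\hU_s,\hY_s,\hZ_s)(\hX_s-X_s)\Bigr)ds.
\end{equation*}

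Finally, the integrand is nonnegative: by concavity of $(x,u)\mapsto H$, the lemma following \Cref{lem:concaveinA} bounds it below by the $u$-directional first-order term, i.e. concavity gives $\varphi(H(\hat\cdot)-H(\cdot))\ge \varphi(H_x(\hX-X))+\varphi(H_u(\hU-U))$, and the maximality condition \eqref{eq:optt_cond_Pontryagin} makes $\varphi(H_u(\hU-U))\ge0$ (first-order condition at a maximum over $\mathcal U$), or alternatively one splits $H(\hX,\hU)-H(X,U)=[H(\hX,\hU)-H(\hX,U)]+[H(\hX,U)-H(X,U)]$, bounding the first bracket by \eqref{eq:optt_cond_Pontryagin} and the second by concavity in $x$. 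I expect the main obstacle to be the It\^{o} step: rigorously identifying the trace of the cross-variation between the biprocess $\hZ=\sum_j A_j\otimes B_j$ and the forward diffusion with $\varphi(\Sigma)$ (interchange of the infinite $j$-sum with the It\^{o} formula, which I would do first for simple biprocesses and pass to the $\mathcal{B}_2^a$-limit by continuity), and making the joint concavity in $(x,u)$ over $\R$ transfer to the trace inequality over $\mathcal{A}$, where noncommutativity of $X$ and $U$ means functional calculus of a single variable as in \Cref{lem:concaveinA} does not apply directly; here I would simply invoke the preceding lemma as stated.
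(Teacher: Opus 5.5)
Up to the last step your argument is the paper's proof: the same splitting of $J(\hU)-J(U)$, the same use of \Cref{lem:concaveinA} for $g$ with $\hY_T=g_x(\hX_T)$, and the same It\^{o} expansion of $\varphi(\hY_T(\hX_T-X_T))$, in which the cross-variation produces $\varphi(\Sigma(\hX_t,\hU_t,\hZ_t)-\Sigma(X_t,U_t,\hZ_t))$. This leads to the same reduction to the integrand $\varphi\bigl(H(t,\hX_t,\hU_t,\hY_t,\hZ_t)-H(t,X_t,U_t,\hY_t,\hZ_t)-H_x(t,\hX_t,\hU_t,\hY_t,\hZ_t)(\hX_t-X_t)\bigr)$. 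The paper then concludes from concavity alone, via the unnumbered lemma following \Cref{lem:concaveinA}. You are right that \eqref{eq:optt_cond_Pontryagin} must enter at this point. For $X_t=\hX_t$, that lemma's inequality reduces to $\varphi(H(t,\hX_t,\hU_t,\hY_t,\hZ_t))\ge\varphi(H(t,\hX_t,U_t,\hY_t,\hZ_t))$, which is the maximality condition itself, and concavity cannot supply it.

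Two points in your final step need fixing.

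\emph{Your alternative split fails.} Concavity in $x$ bounds $H(\hX,U)-H(X,U)$ below by $H_x(\hX,U,\dots)(\hX-X)$, not by $H_x(\hX,\hU,\dots)(\hX-X)$. Splitting the other way would require maximality at $X_t$ instead of at $\hX_t$. This is exactly why joint concavity in $(x,u)$ is assumed, so keep only your first route.

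\emph{The first route needs an extra input.} It works provided the admissible control values form a convex set (e.g.\ all self-adjoint elements). Then maximality at $\hU_t$, together with the G\^{a}teaux derivative of \Cref{rem:h_u_optim_remark}, gives $\varphi\bigl(H_u(t,\hX_t,\hU_t,\hY_t,\hZ_t)(\hU_t-U_t)\bigr)\ge0$. However, you cannot simply ``invoke the preceding lemma as stated'', because it has no $H_u$ term. What you actually need is the two-variable analogue of \Cref{lem:concaveinA}, namely $\varphi(H(\hX,\hU)-H(X,U))\ge\varphi(H_x(\hX,\hU)(\hX-X))+\varphi(H_u(\hX,\hU)(\hU-U))$ for non-commuting $\hX,X,\hU,U$. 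Single-variable functional calculus does not give this, and the paper does not prove it either. It therefore remains the one unproved input in both arguments.
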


\begin{proof} We follow the proof of \cite[Theorem 6.4.6]{Pham2009} and start to express
	\begin{equation}\label{eq:tmp1}
		J(\hU)-J(U)=\varphi\left[\int_0^Tf(t,\hX_t,\hU_t)-f(t,X_t,U_t)dt +g(\hX_T)-g(X_T)\right].
	\end{equation}
	Since $g$ is concave \Cref{lem:concaveinA} gives
	\begin{align*}
		\varphi\left(g(\hX_T)-g(X_T)\right)& \geq \varphi\left(g_x(\hX_T)(\hX_T-X_T)\right) 
		=\varphi\left(\hY_T(\hX_T-X_T)\right)\\
		=&\varphi\left(\int_0^Td\hY_t(\hX_t-X_t) + \int_0^T\hY_t (d\hX_t-dX_t)\right)\\
		&+\varphi\left( \int_0^T d\hat{Y}_t(d\hat{X}_t-dX_t) \right).
	\end{align*}
	Inserting the BSDE \eqref{eq:BSDE} yields
	\begin{multline}\label{eq:tmp2}
		\varphi\left(g(\hX_T)-g(X_T)\right)=\varphi\left(\int_0^T -H_x(t,\hX_t,\hU_t,\hY_t,\hZ_t)(\hX_t-X_t)dt + \int_0^t \hY_t(\hat{b}_t-b_t)dt\right)\\
		+\varphi\left( \int_0^T d\hat{Y}_t(d\hat{X}_t-dX_t)\right).
	\end{multline}

	By the definition of $H$ in \eqref{eq:genHamilton_inA} we obtain

	\begin{equation}\label{eq:tmp3}
		\begin{aligned}
			\varphi\left[\int_0^Tf(t,\hX_t,\hU_t)-f(t,X_t,U_t)dt\right] 
			&= \varphi \left( \int_0^T H(\hX_t,\hU_t,\hY_t,\hZ_t)-H(X_t,U_t,\hY_t,\hZ_t)dt\right)\\
			&- \int_0^T\varphi((a(\hX_t,\hU_t)-a(X_t,U_t))Y_t)dt\\
			&- \int_0^T \varphi(\Sigma (\hat{X}_t,\hat{U}_t,\hat{Y}_t,\hat{Z}_t))dt\\
			&- \int_0^T \varphi(\Sigma (X_t,U_t,\hat{Y}_t,\hat{Z}_t))dt.
		\end{aligned}
	\end{equation}
Inserting \eqref{eq:tmp2}, \eqref{eq:tmp3} into \eqref{eq:tmp1} give 
\begin{align*}
	J(\hat{U})-J(U)&\geq\varphi\left(H(t,\hX,\hU_t,\hY_t, \hZ_t)\right)-\varphi\left(
	H(t,X_t,U_t,\hY_t, \hZ_t)\right)-\\
	&-\varphi\left(H_x(t,\hX_t,\hU_t,\hY_t, \hZ_t)(\hX_t-X_t)\right) .
\end{align*}
Since $H$ is concave w.r.t to the argument $x$ and $u$, applying Lemma (\ref{lem:concaveinA}) shows that  $J(\hat{U})-J(U)\geq 0$.
\end{proof}
\begin{rem}\label{rem:h_u_optim_remark}
	The optimality condition in \eqref{eq:optt_cond_Pontryagin} can be handeled by the the derivative of the Hamilonian w.r.t. $u$ in $\R$. 
	If we calculate the G\^{a}teaux derivative of $H$ in the von Neumann algebra along linear path of self-adjoints, then
	\begin{equation*}
		\varphi\left(\frac{d}{d\epsilon} \varphi(H(X,U+\epsilon V,Y,Z))\right)= \varphi\left(T_H^{U,U}(V)\right)=\varphi\left(H_u(X,U,Y,Z)V\right)
	\end{equation*}
	for all $V\in\mathcal{A}^{sa}$. See \cite[Theorem 5.3.5]{Skripka2019}. The definition of the double operator integral $T_H^{U,U}(V)$ can also be found in this reference.\\
	Then \eqref{eq:optt_cond_Pontryagin} can be handeled by first calculating  the extreme values in $\R$ and then carrying over the result to the non-commutative context. In the next section, we will make use of this procedure to obtain solutions to optimal control problems.
\end{rem}

\section{Examples}\label{sec:examples}
The aim of the following examples is to find an explicit solution of the underlying optimal control problem. The solution strategy is aligned by the strategies of solving a commutative control problem. We solely consider linear problems.
\subsection{Example 1}
It is taken from \cite{gangbo2025viscsol}. Given a fSDE by
\begin{equation*}\label{eq:Example1-fSDE}
	dX_t=U_tdt + dW_t, X_0\in\mathcal{A}_0^{sa}, 
\end{equation*}
and a cost functional
\begin{equation*}
	J(U)=-\varphi\left(\int_0^T1/2U_s^2ds + \theta_1X_T^2+\theta_2\varphi (X_T)X_T \right),
\end{equation*}
where $\theta_1,\theta_2\in\R$.
Our aim is to determine $\sup_{U\in\mathbb{U}} J(U)$.
Note that $g$ in $\R$ is given by $g(x)=(\theta_1+\theta_2)x^2$. We can rewrite $g$ as $g(x)=\theta_1x^2+\theta_2\varphi(x\text{id})x$, which gives the expression in the cost functional $J$ by applying the functional calculus. The end condition in the adjoint equation requires $g_x=2(\theta_1+\theta_2)x$.
The generalized Hamiltonian \eqref{eq:genHamilton_inR}
\begin{equation*}
	H(t,x,u,y,z)=1/2(uy+yu)+ 2\varphi(z\text{id}) + 2z - 1/2u^2.
\end{equation*}
The optimality condition \eqref{eq:optt_cond_Pontryagin} can be handled by \Cref{rem:h_u_optim_remark}. Since in $\R$ we get $H_u(x,u,y,z)=u-y$, the maximum condition gives $\hat{U}_t=\hat{Y}_t$.
The adjoint equation results in
\begin{equation}\label{eq:Example1-BSDE}
	dY_t=Z_t\#dW_t, \, Y_T=-2(\theta_1+\theta_2)X_T.
\end{equation}
We search a function $p\in C^1([0,T],\R)$, such that  
\begin{equation}\label{eq:Example1-AnsatzYP}
	\hat{Y}_t = p(t)\hat{X}_t.
\end{equation}
We simply write $p$ and skip the dependency on $t$ when the context is clear.  By differentiating \eqref{eq:Example1-AnsatzYP} and inserting \eqref{eq:Example1-fSDE} yields 
\begin{align*}
	d\hat{Y}_t&=dp\hat{X}_t+pd\hat{X}_t\\
	&=dp\hat{X}_t+p\hat{U}_tdt + pdW_t\\
	&=dp\hat{X}_t+p^2\hat{X}_tdt + pdW_t
\end{align*}
Comparing the drift terms with \eqref{eq:Example1-BSDE} and using $\hat{U}_t=\hat{Y}_t=p\hat{X}_t$, we obtain the deterministic differential equation
\begin{equation}\label{eq:diffeqPexampleJekel}
	\dot{p}=-p^2, p(T)=-2(\theta_1+\theta_2).
\end{equation}
In the deterministic context, this differential equation is the Riccati differential equation for the given linear quadratic optimal control problem. 
The solution of \eqref{eq:diffeqPexampleJekel} is
\begin{equation}
	p(t)=\left((-1/(2(\theta_1+\theta_2))+(t-T))\right)^{-1}.
\end{equation}
Thus
\begin{equation}
	\hat{U}_t=\left((-1/(2(\theta_1+\theta_2))+(t-T))\right)^{-1}\hat{X}_t,
\end{equation}
and the optimal process is an Ornstein-Uhlenbeck process given as a solution of the fSDE
\begin{equation}
	d\hat{X}_t=\left((-1/2(\theta_1+\theta_2)+(t-T))\right)^{-1}\hat{X}_t dt + dW_t. 
\end{equation}

\subsection{Example 2}\label{seq:LQR-example}
This example extends the previous one by adding $X_t$ into the control variable. Consider the forward fSDE
\begin{equation}
	dX_t=(aX_t+bU_t)dt+W_t, \, X_0\in\mathcal{A}_0^{sa}
\end{equation}
and $a,b,c\in\R$ and $\alpha,\beta\in\R^+$.
Define the quadratic cost function by
\begin{equation}\label{eq:LQR-costfunctional-2}
	J(U)=-\varphi\left(1/2\int_0^T(\alpha X_s^2 + \beta U_s^2)ds + \theta_1 X_T^2+\theta_2\varphi(X_T)X_T\right).
\end{equation}
We are seeking $\hat{U}\in\mathcal{U}$ such that $J(\hat{U})=\sup_{U\in\mathcal{U}}J(U)$.\\
Define the generalized Hamiltonian in $\R$ as
\begin{equation*}
	\begin{aligned}
		H(x,u,y,z)&=1/2(ax+bu)y+1/2y(ax+bu) + \Sigma(z) -  \alpha/2x^2 -  \beta/2u^2
	\end{aligned}
\end{equation*}
where
\begin{align*}
	H_x(x,u,y,z)&=ay-\alpha x\\
	H_u(x,u,y,z)&=by-\beta u.
\end{align*}
The BSDE is
\begin{equation}\label{eq:Example2-BSDE}
	-dY_t=(aY_t-\alpha X_t)dt - Z_t\#dW_t, \, Y_T=-2(\theta_1+\theta_2)X_T.
\end{equation}
Since we set $H_u=0$ we obtain $U_t=1/\beta Y_t$. As in the previous example, we make the following ansatz. Search $p\in C^1([0,T;\R])$ such that $\hat{Y}_t=p(t)\hat{X}_t$. Then
\begin{align*}
	d\hat{Y}_t&=dp\hat{X}_t+pd\hat{X}_t\\
	&=dp\hat{X}_t+ap\hat{X}_tdt + bp\hat{U}_tdt + pcdW_t\\
	&=dp\hat{X}_t+ap\hat{X}_tdt + p^2b^2/\beta \hat{X}_tdt + pcdW_t\\
	&\overset{!}{=}-ap\hat{X}dt+\alpha\hat{X}dt + \hat{Z}_t\#dW_t.
\end{align*}
We obtain the Riccati differential equation
\begin{align*}
	\dot{p}+2ap+b^2/\beta p^2-\alpha=0, \, p(T)=-2(\theta_1+\theta_2).
\end{align*}
The solution allows to calculate $\hat{U}_t=\hat{Y}_t=p\hat{X}_t$. Inserting $\hat{U}_t$ into the fSDE gives the fSDE w.r.t. $\hat{X}_t$, which is an Ornstein-Uhlenbeck process.
Similar to the commutative, classical case, $\hat{U}_t$ does not depend on the parameter $c$. 

\subsection{Example 3}
Given $b,r,\sigma,\alpha,\beta,\lambda \in\R$ consider the forward equation
\begin{equation*}
	dX_t=(rX_t+(b-r)U_t)dt+\sigma U_tdW_t + dW_t\sigma U_t,
\end{equation*}
Define
\begin{equation*}
	J(U)=\varphi\left(-1/2\int_0^T\alpha X_s^2 + \beta U_s^2ds -   (X_T-\lambda\textbf{1})^2 \right).
\end{equation*}
Setting $\alpha=\beta=0$, this example is the free equivalent of \cite[Section 6.6.2]{Pham2009}.
The Hamiltonian is
\begin{equation*}
	H(x,u,y,z)=1/2rxy+1/2ryx+1/2(b-r)uy+1/2(b-r)yu+\Sigma-\alpha/2x^2-\beta/2u^2.
\end{equation*}
Then $H_x=0$ and the BSDE is
\begin{equation*}
	-dY_t=(rY_t-\alpha X_t)dt - Z_t\#dW_t, \, Y_T=-2(X_T-\lambda\textbf{1}).
\end{equation*}
Maximizing $H$ in $\R$ by calculating $H_u=0$ yields the control variable
\begin{equation}\label{eq:examplePham_maxh_u}
	\hat{U}_t=1/\beta(b-r)\hat{Y}_t+1/\beta\Sigma_u.
\end{equation}
We consider the ansatz $\hat{Y}_t=p\hat{X}_t+q\textbf{1}$ with $p,q \in C^1([0,T],\R)$. Then 
\begin{align}
	d\hat{Y}_t&=dp\hat{X}_t+pb\hat{X}_tdt + p(b-r)U_tdt+p\sigma\hat{ U} dW + dWp\sigma \hat{U}_t\label{eq:examplePham-compcoeff-1}+dq\textbf{1}\\
	&\overset{!}{=}-r\hat{Y}_tdt-\Sigma_xdt+\alpha\hat{X}_tdt + \hat{Z}_t\#dW_t.\label{eq:examplePham-compcoeff-2}
\end{align}
This shows that
\begin{equation*}
	\hat{Z}_t=A_1\otimes B_1 + A_2\otimes B_2 
	=p\sigma\hat{U}_t\otimes\textbf{1}+\textbf{1}\otimes p\sigma\hat{U}_t
\end{equation*}
On the other hand
\begin{align*}
	\Sigma&=p\sigma/2(\sum_j\hat{U}_tB_j\varphi(A_j)+B_j\varphi(\hat{U}_tA_j)\\
	&+p\sigma/2\sum_j A_j\varphi(B_j\hat{U}_t)+A_j\hat{U}_t\varphi(B_j)\\
	&=p^2\sigma^2(2\hat{U}_t\varphi(\hat{U}_t)+\hat{U}_t^2+\varphi(\hat{U}_t^2)).
\end{align*}
Thus $\Sigma_x=0$ and 
\begin{align*}
	\Sigma_u&=4p^2\sigma^2(\hat{U}+\varphi(\hat{U})).
\end{align*}
This gives an implicit equation for $\hat{U}_t$ in \eqref{eq:examplePham_maxh_u}. To proceed we only search control $U_t$ for which $\varphi(\hat{U}_t)=0$. Then from \eqref{eq:examplePham_maxh_u} we obtain
\begin{equation*}
	\hat{U}_t=\frac{b-r}{\beta-4\sigma^2 p^2}\hat{Y}_t= \frac{p(b-r)}{\beta-4\sigma^2 p^2}\hat{X}_t+\frac{q(b-r)}{\beta-4\sigma^2 p^2}.
\end{equation*}
By comparison of coefficients in \eqref{eq:examplePham-compcoeff-1} and \eqref{eq:examplePham-compcoeff-1} we obtain
\begin{align*}
	\dot{p}+p\left(2r+\frac{p(b-r)^2}{\beta-4\sigma^2p^2}\right)-\alpha&=0, \, p_T=-2\\
	\dot{q} + q\left(r+\frac{p(b-r)^2}{\beta-4\sigma^2 p^2}\right)&=0, \, q_T=2\lambda.
\end{align*}
Note that the ODE for $p$ corresponds to \eqref{eq:matrixriccati-classical}, when $A,B,D,R$ are multiples of the identity matrix (in our case, we have two diffusion terms!).

\bibliographystyle{plainurl}
\bibliography{biblio-optim}

\end{document}